\def\@seccntformat#1{\csname the#1\endcsname.\ } 
\def\@biblabel#1{#1.} 
\date{}
\newenvironment{proof}[1][\hspace{-1.0ex}]%
{\par\addvspace{1mm}{\it Proof\hspace{1.0ex}{#1}.} }%
{\quad$\blacktriangle$\par\addvspace{1mm}}
    \newif\ifNoRemark
    \def\addtheorem#1#2#3#4{ 
    \ifthenelse{\expandafter\isundefined\csname the#2\endcsname}{\newcounter{#2}}{}
    \newenvironment{#1}[1][\global\NoRemarktrue]
     {\par\addvspace{2mm}\noindent 
       \refstepcounter{#2}{\bf #3~\csname the#2\endcsname
      \vphantom{##1}\ifNoRemark.\ \else\ (##1).\fi}\begingroup #4}%
     {\endgroup\par\addvspace{1mm}\global\NoRemarkfalse}
    \expandafter\newcommand\csname b#1\endcsname{\begin{#1}}
    \expandafter\newcommand\csname e#1\endcsname{\end{#1}}
    }
\title{On the number of SQSs, latin hypercubes and MDS codes\thanks{ The work was funded by the
Russian Science Foundation (grant No 14-11-00555). }}
\author{Vladimir N. Potapov\\
\emph{Sobolev Institute of Mathematics, Novosibirsk, Russia}
\\vpotapov@math.nsc.ru}
\begin{document}

\maketitle

\begin{abstract}

It is established that the logarithm of the  number of latin
$d$-cubes of order $n$ is $\Theta(n^{d}\ln n)$ and the logarithm of
the number of pairs of orthogonal latin squares of order $n$ is
$\Theta(n^2\ln n)$. Similar estimations are obtained for
 systems of mutually strong orthogonal latin $d$-cubes.
As a consequence, it is  constructed a set of Steiner quadruple
systems of order $n$ such that  the logarithm of its cardinality is
$\Theta(n^3\ln n)$ as $n\rightarrow\infty$ and $n\ {\rm mod}\ 6= 2\
{\rm or}\ 4$.

\end{abstract}

Keywords: Steiner system, Steiner quadruple system, MDS code, block
design, Latin hypercube, MOLS.

MSC 51E10, 05B05, 05B15

\section{Introduction}

 Let  $Q$ denote the set $\{0,1,\dots,q-1\}$.
A {\it Steiner system} with parameters $\tau, d, q$, $\tau\leq d$,
written $S(\tau,d,q)$, is  a set of $d$-element subsets of $Q$
(called {\it blocks}) with the property that each $\tau$-element
subset of $Q$ is contained in exactly one block. The best known
Steiner systems are $S(2,3,q)$ (called {\it Steiner Triple Systems
of order} $q$, $STS(q)$) and $S(3,4,q)$ (called {\it Steiner
Quadruple Systems of order} $q$, $SQS(q)$). It is well known that an
$STS(q)$ exists if and only if $q\ {\rm mod}\ 6= 1\ {\rm or}\ 3$.
Alekseev \cite{Alek} showed that the logarithm of the number of
nonisomorphic $STS(q)$ is equal\footnote{\, Notation
$f(x)=\Theta(g(x))$ as $x\rightarrow x_0$ means that there exist
constants $c_2\geq c_1>0$ and a  neighborhood $U$ of $x_0$ such that
for all $x\in U$ $c_1g(x)\leq f(x)\leq c_2g(x)$.} to $\Theta(q^2\ln
q)$ as $q\rightarrow\infty$. Egorychev \cite{Egor} obtain the
asymptotic of this function, which  is $\frac{q^2}{6}\ln q$. More
accurate an upper bound of the number of STSs  was calculated in
\cite{LL13}. Hanani \cite{Hanani} proved that the necessary
condition  of $q\ {\rm mod}\ 6= 2\ {\rm or}\ 4$ for the existence of
 Steiner quadruple systems of order $q$ is also sufficient. Lenz
\cite{Lenz} proved that the logarithm of the number of different
SQSs of order $q$ is greater than $cn^3$ where $c>0$ is a constant
and $n$ is admissible. Constructions and properties of SQSs are
studied in \cite{Hartman,LR,ZG,ZZ}. Recently Keevash \cite{Keevash}
showed  that the natural divisibility conditions are sufficient for
existence of Steiner system apart from a finite number of
exceptional $q$ given fixed $\tau$ and $d$. Moreover, Keevash
\cite{Keevash} proved that the logarithm of the number of different
$S(\tau,d,q)$ (with fit parameters) is equal to ${d \choose
\tau}^{-1}{q \choose \tau}(d-\tau)(1+o(1))\log q$ as
$q\rightarrow\infty$. His proof is based on probabilistic methods.
 One of the results in this
paper is the construction of a set of SQSs that reaches asymptotic
estimation  $\Theta(q^3\ln q)$ as $q\rightarrow\infty$ and $q\ {\rm
mod}\ 6= 2\ {\rm or}\ 4$.

A {\it latin square} of order $q$ is a $q\times q$ array of $q$
symbols  in which each symbol occurs exactly once in each row and in
each column.
A $d_0$-dimensional array of $q$ symbols is called a {\it latin
$d_0$-cube of order $q$} if each $2$-dimensional subarray is a latin
square of order $q$.
 The best known asymptotic estimate
of the number of latin squares is $((1+o(1))q/e^2)^{q^{2}}$ (see
\cite{LW}), which follows from the lower bound obtained in
\cite{Egor} and the upper bound that can be derived from Bregman's
inequality for permanents. A table of numbers of nonequivalent latin
squares of small orders is available in \cite{HKO,MW0,Ston}. An
upper bound $((1+o(1))q/e^{d_0})^{q^{d_0}}$ of the number of latin
$d_0$-cubes is proved in \cite{LL14}.  One of the goal in this paper
is to calculate the logarithm of the number  of latin $d_0$-cubes of
order $q$. It is equal to $\Theta(q^{d_0}\ln q)$ as
$q\rightarrow\infty$. In  \cite{KPS} and \cite{KP11} it is found
estimations of the number of latin $d_0$-cubes of order $q$ as
$d\rightarrow\infty$ and $q$ is a constant. A classification of
latin hypercubes for small orders and dimensions is available in
\cite{MW}.

From the definition we can be sure that a latin $d_0$-cube is the
$d_0$-ary Cayley table of a quasigroup.  A system consisting of $t$
$s$-ary functions $f_1,\dots,f_t$  ($t\geq s$) is {\it orthogonal},
if for each subsystem $f_{i_1},\dots,f_{i_s}$ consisting of  $s$
functions it holds $
\{(f_{i_1}(\overline{x}),\dots,f_{i_s}(\overline{x})) \ |\
\overline{x}\in Q^{s}\}=Q^{s}.$ If the system remains orthogonal
after substituting any constants for each subset of variables then
it is called {\it strongly orthogonal} (see \cite{EMullen}). If the
number of variables  equals $2$ ($s=2$) then such a system is
equivalent to a set of {\it Mutually Orthogonal Latin Squares}
(MOLS). If $s>2$, it is a set of {\it Mutually Strong Orthogonal
Latin $s$-Cubes} (MSOLC). For example, if we have three orthogonal
latin cubes ($t=s=3$) then each triple $(a_1,a_2,a_3)\in Q^3$ meets
once as
$(a_1,a_2,a_3)=(f_1(x_1,x_2,x_3),f_2(x_1,x_2,x_3)f_3(x_1,x_2,x_3))$.
 Furthermore, if we fix one variable ($x_3=c$) then we obtain three
 MOLS $f_1(x_1,x_2,c), f_2(x_1,x_2,c)$ and $f_3(x_1,x_2,c)$.

 A subset $C$ of $Q^d$ is called an
$MDS(t,d,q)$ {\it code} (of order $q$, code distance $t+1$ and
length $d$) if $|C\cap\Gamma|=1$ for each $t$-dimensional
axis-aligned plane $\Gamma$. A system of $t$ MSOLC is equivalent to
an MDS code with distance $t+1$ (see \cite{EMullen}). For example,
$MDS(1,d,q)$ {\it codes} are equivalent to latin $(d-1)$-cubes of
order $q$. We prove that the logarithm of the number of sets of $t$
($t\geq 2$) MOLS of order $q$   is $\Theta(q^2\ln q)$ as
$q\rightarrow \infty$. Besides, we establish new  lower bounds for
the numbers of other systems of MSOLC with certain parameters. For a
subsequence of orders,  the logarithm of the number of systems of
MOLS was found in \cite{Donovan} (in other terminology but using a
similar method). A classification of system of MSOLC (and MDS codes)
for small orders and dimensions is available in \cite{Egan,KKO,KO}.

The main idea of the proof of the lower bound for the number of MDS
codes is as follows. Let $q=p^k$ where $p$ is a prime number. Then
we can consider elements of $Q$ as $k$-dimensional vectors over
$GF(p)$. Consider a linear (over $GF(p)$) MDS code or latin
hypercube of order $p^k$. This code contains a lot of subcodes or
switching components. In this case switching components are affine
subspaces over $GF(p)$. Then we calculate a number of possibilities
to obtain a new code from the initial code by switching disjoint
components. The method of switching components is discussed in
\cite{Ost}. We use results of \cite{KPS} to obtain a lower bound for
the number of latin hypercubes of large enough orders and results of
\cite{Dukes} and \cite{Zhu} to obtain a lower bound for the number
of pair of MOLS.

It is well known that STSs of order $q$ are equivalent to totally
symmetric quasigroups of order $q$. As mentioned above, a latin
square can be represented as the Cayley table of a quasigroup,
i.\,e. as a set of ordered triples of the $q$-element set such that
each pair of elements occurs in each pair of positions and a pair of
elements of any triple defines the third element of the triple. The
first and the second elements of triples define row and column, and
the third element defines the symbol in the corresponding entry of a
latin square. Thus,  given an $STS(q)$,  we can obtain a totally
symmetric latin square by replacing each unordered triple with the
six ordered triples and by adding $q$ triples of the form $(a,a,a)$.
Analogously, given an $SQS(q)$, we can obtain a symmetric latin cube
by replacing each unordered quadruple with the twenty four ordered
quadruple and by adding $3q(q-1)$ quadruples of the forms
$(a,a,b,b)$, $(a,b,a,b)$, $(a,b,b,a)$,  and $q$ quadruples of the
form $(a,a,a,a)$. The main idea of the proof of the lower bound for
the number of SQSs of order $q$ is the insertion an arbitrary latin
cube (of order less than $q$)  into an $SQS(q)$. We use  some
constructions of SQSs by Hanani \cite{Hanani63} and Hartman
\cite{Hartman}. Moreover, in one case ($q\mod 36=2$) we need to
introduce a new construction of SQSs which allows insert of
arbitrary latin cubes.

\section{MDS codes}

 The following criteria for MDS codes are well-known.

\begin{proposition}\label{proNM5}
A subset $M\subset Q^d$ is an {\rm MDS} code if and only if
$|M|=|Q|^{d-\varrho+1}$, where $\varrho$ is the code distance of
$M$.
\end{proposition}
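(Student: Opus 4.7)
The plan is to recognize the proposition as the standard Singleton bound applied to the combinatorial geometry of axis-aligned planes, and to prove both directions by a double counting of (codeword, plane) incidences.

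First I would set up the bookkeeping. A $t$-dimensional axis-aligned plane $\Gamma \subset Q^d$ is specified by choosing $d-t$ coordinate positions and assigning each of them a value in $Q$; thus there are $\binom{d}{d-t}q^{d-t}$ such planes and each contains exactly $q^t$ points. Two points that both belong to some $t$-dimensional plane agree on the chosen $d-t$ coordinates, so they differ in at most $t$ positions. Conversely, any two points that agree on some $d-t$ coordinates lie together in the corresponding $t$-plane.

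For the forward direction, suppose $M$ is an $\mathrm{MDS}(t,d,q)$ code, so that every $t$-plane meets $M$ in exactly one point; the code distance is then $\varrho=t+1$, since any two distinct codewords cannot share $d-t$ coordinates. Count the pairs $(x,\Gamma)$ with $x\in M\cap\Gamma$ and $\Gamma$ a $t$-plane: planes contribute $\binom{d}{d-t}q^{d-t}\cdot 1$, while each codeword $x$ lies in exactly $\binom{d}{d-t}$ such planes (one for each choice of $d-t$ fixed coordinates). Equating gives $|M|=q^{d-t}=q^{d-\varrho+1}$.

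For the converse, assume $M$ has code distance $\varrho$ and $|M|=q^{d-\varrho+1}$. Two distinct codewords cannot share $d-\varrho+1$ coordinates (else they would differ in at most $\varrho-1$ positions), so every $(\varrho-1)$-plane contains at most one codeword. Counting the same incidences with $t=\varrho-1$ yields $|M|\binom{d}{\varrho-1}\le\binom{d}{\varrho-1}q^{d-\varrho+1}$, and the hypothesis makes this an equality, forcing every $(\varrho-1)$-plane to contain exactly one codeword. Hence $M$ is an MDS code. The only subtle step is the passage from ``distance $\varrho$'' to ``no two codewords share $d-\varrho+1$ coordinates,'' which is immediate from the definition of Hamming distance on $Q^d$; beyond that, the argument is purely a double count and presents no real obstacle.
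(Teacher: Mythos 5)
The paper gives no proof of this proposition at all---it is introduced with ``The following criteria for MDS codes are well-known''---so there is no argument of the author's to compare against; your double count is the standard way to establish the equivalence, and it is essentially sound. The converse direction is exactly the Singleton bound plus the observation that equality in the incidence count forces every $(\varrho-1)$-plane to meet $M$ exactly once. The one step stated too quickly is in the forward direction: from ``every $t$-plane meets $M$ in exactly one point'' you assert that the code distance \emph{equals} $t+1$, but the justification you give (no two distinct codewords share $d-t$ coordinates) only yields $\varrho\geq t+1$. This matters, because if $\varrho$ were strictly larger than $t+1$ the identity $|M|=q^{d-\varrho+1}$ you derive would be false. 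It is fixed in one line: either invoke the inequality $|M|\leq q^{d-\varrho+1}$, which your converse counting already proves, to force $\varrho\leq t+1$; or exhibit a pair of codewords at distance exactly $t+1$ by taking two $t$-planes that fix the same $d-t$ coordinate positions with value assignments differing in a single position---their unique codewords are distinct, agree on $d-t-1$ coordinates, and hence lie at distance exactly $t+1$. With that line added the proof is complete.
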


\begin{proposition}\label{proAO0}{\rm \cite{EMullen}} A set $C\subset Q^{t+s}$
is an {\rm MDS}-code with code distance $\varrho_C=s+1$ if and only
if there exists strongly orthogonal system consisting of $t$ $s$-ary
quasigroups $f_1,\dots,f_t$ such that
$$C=\{(x_1,\dots,x_{s},f_1(\overline{x}),\dots,f_t(\overline{x})) \ |\ \overline{x}\in
Q^{s}\}.$$  \end{proposition}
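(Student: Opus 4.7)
The plan is to prove both implications, in each direction reducing the MDS property to the condition that $|C\cap\Gamma|=1$ for every axis-aligned plane $\Gamma$ of the appropriate dimension, as packaged by Proposition~\ref{proNM5}.

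For the ``if'' direction, assume $f_{1},\dots,f_{t}$ form a strongly orthogonal system of $s$-ary quasigroups and define $C$ as in the statement; the parameterization gives $|C|=q^{s}$ at once. To verify the MDS property it suffices to show that for every choice of $s$ fixed coordinate positions in $\{1,\dots,s+t\}$ and every prescribed tuple of values on them, there is exactly one codeword of $C$ agreeing with those values on those positions. I would split this into cases according to how the $s$ fixed positions are distributed between the first $s$ ``input'' coordinates and the last $t$ ``output'' coordinates. If all fixed positions are inputs, $\overline{x}$ is read off directly. If all are outputs, indexed by $s+i_{1},\dots,s+i_{s}$, existence and uniqueness of $\overline{x}$ mapping under $(f_{i_{1}},\dots,f_{i_{s}})$ to the prescribed tuple is exactly plain orthogonality of that subsystem. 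In the mixed case, say $s-k$ of the fixed positions are inputs and $k$ are outputs with $1\le k\le s-1$, I would substitute the $s-k$ prescribed constants into the corresponding input variables; the resulting system of $k$-ary functions is orthogonal by strong orthogonality, so any $k$ of them form a bijection $Q^{k}\to Q^{k}$, producing the unique solution for the remaining $k$ input variables.

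For the ``only if'' direction, fixing the first $s$ coordinates to an arbitrary $\overline{x}\in Q^{s}$ yields an axis-aligned plane of the correct dimension, so by the MDS property there is a unique codeword whose initial $s$ entries equal $\overline{x}$; defining $f_{i}(\overline{x})$ to be its $(s+i)$-th entry immediately produces the claimed parameterization of $C$. Plain orthogonality of each $s$-element subfamily $f_{i_{1}},\dots,f_{i_{s}}$ then follows by applying the MDS property to planes that fix the output positions $s+i_{1},\dots,s+i_{s}$, and strong orthogonality is obtained from the same argument applied to planes that fix a mixed set of $s-k$ inputs and $k$ outputs.

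The main obstacle I anticipate is the mixed case: plain orthogonality already handles the two ``pure'' extremes where all $s$ fixed coordinates are inputs or all are outputs, so it is precisely the strengthening from orthogonality to strong orthogonality that accounts for the axis-aligned planes intermediate between these extremes. Once this correspondence between mixed planes on the code side and substitutions of constants on the quasigroup side is spelled out, the rest is the routine size-versus-distance bookkeeping supplied by Proposition~\ref{proNM5}.
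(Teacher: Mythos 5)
Your argument cannot be compared with the paper's own proof because the paper gives none: Proposition~\ref{proAO0} is quoted from \cite{EMullen} without proof. On its own merits the proposal is essentially right, and it is the standard equivalence: reading the MDS condition as ``any $s$ of the $s+t$ coordinates determine a unique codeword'' and matching the three cases (all inputs, all outputs, mixed) with the quasigroup property, plain orthogonality, and strong orthogonality respectively is exactly the intended correspondence, and you correctly identify the mixed planes as the only place where the strengthening to strong orthogonality is used.

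One point you should make explicit rather than leave implicit. What your case analysis verifies is that every axis-aligned plane of codimension $s$ (i.e.\ every $t$-dimensional plane) meets $C$ exactly once; together with $|C|=q^{s}$ and Proposition~\ref{proNM5} this says the code distance is $t+1$, not $s+1$. The value $\varrho_C=s+1$ printed in the statement is incompatible with $|C|=q^{s}$ unless $s=t$, since Proposition~\ref{proNM5} would then force $|C|=q^{t}$; the paper's own later use confirms the intended reading (in Proposition~\ref{progs2} three binary quasigroups give a code of length $5$ and distance $4=t+1$). So your phrase ``axis-aligned plane of the appropriate dimension'' is doing silent work: the appropriate planes are the ones obtained by fixing $s$ coordinates, and that choice is correct precisely because the distance is $t+1$. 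The same remark applies to the converse direction, where fixing the first $s$ coordinates yields a plane meeting $C$ exactly once only under the corrected distance. Either flag the typo or state explicitly which planes you intersect with; as written, a reader checking your proof against the literal statement with $s\neq t$ would conclude you proved a different claim.
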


A projection of a set $C\subset Q^d$ in the  $i$th direction is
called the set $$C_i=\{(x_1,\dots,x_{i-1},x_{i+1},\dots x_{d}) \ |\
\exists x_i\ \mbox{such that}\ (x_1,\dots,x_{d})\in C\}.$$

\begin{proposition}\label{progs1}
Any projection of an MDS code is an MDS code.
\end{proposition}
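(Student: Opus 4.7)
The plan is to show that if $C\subset Q^d$ is an $MDS(t,d,q)$ code (so $|C\cap\Gamma|=1$ for every $t$-dimensional axis-aligned plane $\Gamma$), then the projection $C_i\subset Q^{d-1}$ is an $MDS(t-1,d-1,q)$ code; the trivial case $t=0$ gives $C=Q^d$ and $C_i=Q^{d-1}$, so I can assume $t\geq 1$.

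My first step is to verify that the restriction of the coordinate-forgetting map $\pi_i\colon Q^d\to Q^{d-1}$ to the codewords of $C$ is injective. Indeed, if two distinct codewords agreed on all coordinates except the $i$-th, they would lie on a common $1$-dimensional axis-aligned line; since $t\geq 1$, that line can be extended to a $t$-dimensional axis-aligned plane by leaving $t-1$ additional coordinates free, and this plane would then contain at least two codewords, contradicting the MDS condition on $C$.

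The second step is the MDS verification for $C_i$. Given any $(t-1)$-dimensional axis-aligned plane $\Gamma'\subset Q^{d-1}$, I lift it to $\Gamma=\pi_i^{-1}(\Gamma')\subset Q^d$ by allowing coordinate $i$ to vary freely; because $\Gamma'$ already has $t-1$ free directions, $\Gamma$ is a $t$-dimensional axis-aligned plane. The MDS property of $C$ gives $|C\cap\Gamma|=1$, and the injectivity from the first step yields $|C_i\cap\Gamma'|=|\pi_i(C\cap\Gamma)|=1$. Hence each $(t-1)$-dimensional axis-aligned plane in $Q^{d-1}$ meets $C_i$ in exactly one point, so $C_i$ is an MDS code of distance $t$.

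There is no real obstacle here beyond correct dimension bookkeeping: the content of the argument is entirely captured by the observation that the preimage of a $(t-1)$-plane under a single-coordinate projection is a $t$-plane, and that the MDS assumption makes the projection injective on $C$, so the ``exactly one intersection'' property transfers directly.
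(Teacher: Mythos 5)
Your proof is correct, but it takes a different route from the paper. The paper disposes of this proposition in one line by invoking Proposition \ref{proNM5} (the cardinality criterion $|M|=|Q|^{d-\varrho+1}$): implicitly, the projection preserves cardinality because the minimum distance is at least $2$, the code distance drops by at most one under deletion of a coordinate, and the Singleton bound then forces equality, so the projection again meets the criterion. You instead verify the defining intersection property directly: a $(t-1)$-dimensional axis-aligned plane $\Gamma'\subset Q^{d-1}$ lifts to the $t$-dimensional plane $\pi_i^{-1}(\Gamma')$, whose unique intersection point with $C$ projects to the unique point of $C_i\cap\Gamma'$. This is more elementary and self-contained, since it never needs the equivalence of the two MDS characterizations. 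One small remark: your injectivity step is not actually needed for the counting in your second step --- the image of a one-element set is a one-element set regardless --- though it is exactly what one needs to run the paper's cardinality argument, so it is a natural observation to make; and your lifting argument also silently reproves injectivity (two codewords differing only in coordinate $i$ would both lie in some lifted plane). Both proofs are sound; yours trades the appeal to Proposition \ref{proNM5} for explicit dimension bookkeeping with axis-aligned planes.
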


Proposition \ref{progs1} follows from Proposition \ref{proNM5}.

\begin{proposition}\label{progs2}
Let $M\subset Q^5$ be an MDS code with  code distance $4$ and let
$M'$ be a $4$-dimensional projection of $M$. Then there exists an
MDS code $C\subset Q^4$ with  code distance $2$ such that $M'\subset
C$.
\end{proposition}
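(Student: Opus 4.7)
My approach is to use Proposition \ref{proAO0} to identify $M$ with a system of three pairwise orthogonal latin squares and then to construct the required latin cube explicitly by using the third latin square as auxiliary data. By Proposition \ref{proAO0} the code $M$ may be written as
\[M=\{(x_1,x_2,f_1(x_1,x_2),f_2(x_1,x_2),f_3(x_1,x_2)):(x_1,x_2)\in Q^2\}\]
for some triple $f_1,f_2,f_3$ of mutually orthogonal latin squares of order $|Q|$. Because any two of the five coordinates can serve as information positions, the symmetry of the MDS code lets me assume, after a relabeling of coordinates if necessary, that the $4$-dimensional projection drops the $f_3$-coordinate, so that $M'=\{(x_1,x_2,f_1(x_1,x_2),f_2(x_1,x_2))\}$ and $f_3$ remains available as auxiliary data.

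I then take $C=\{(x_1,x_2,x_3,g(x_1,x_2,x_3)):(x_1,x_2,x_3)\in Q^3\}$, where $g:Q^3\to Q$ is defined as follows: given $(x_1,x_2,x_3)\in Q^3$, let $(u,v)\in Q^2$ be the unique pair satisfying $f_1(u,v)=x_3$ and $f_3(u,v)=f_3(x_1,x_2)$---uniqueness holds because $(f_1,f_3)$ is a pair of MOLS---and set $g(x_1,x_2,x_3):=f_2(u,v)$. The containment $M'\subset C$ is immediate: specializing $x_3=f_1(x_1,x_2)$ forces $(u,v)=(x_1,x_2)$, whence $g(x_1,x_2,f_1(x_1,x_2))=f_2(x_1,x_2)$.

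It remains to verify that $g$ is a ternary quasigroup, which is equivalent to $C$ having code distance~$2$. If $(x_1,x_2)$ is fixed and $x_3$ varies, then $(u,v)$ runs bijectively over the transversal $\{f_3(\cdot,\cdot)=f_3(x_1,x_2)\}$ (parameterized by $x_3=f_1(u,v)$), and $f_2$ is a bijection to $Q$ on this transversal by the orthogonality of $(f_2,f_3)$. If instead $(x_1,x_3)$ or $(x_2,x_3)$ is fixed and the remaining coordinate varies, then $f_3(x_1,x_2)$ varies bijectively, so $(u,v)$ runs bijectively over the transversal $\{f_1(\cdot,\cdot)=x_3\}$, on which $f_2$ is bijective by the orthogonality of $(f_1,f_2)$. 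Thus $g$ is bijective in each of its arguments when the other two are fixed, and $C$ is the desired MDS code.

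The main obstacle I anticipate is discovering the right formula for $g$: a purely local extension of $M'$ (filling each $x_3$-fibre with an arbitrary permutation agreeing with $f_2(x_1,x_2)$ at $x_3=f_1(x_1,x_2)$) generically breaks bijectivity in the other two directions, and a linear trick on a group structure of $Q$ is not available for arbitrary $Q$. The role of the hypothesis ``$M$ has distance $4$ in $Q^5$'' is precisely to supply the third orthogonal mate $f_3$, which stitches the pointwise extension into a globally consistent latin cube.
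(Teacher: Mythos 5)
Your construction is correct and is essentially the paper's own proof: the code $C=\{(x_1,x_2,x_3,g(x_1,x_2,x_3))\}$ you build coincides with the paper's $C=\{(x,y,u,v)\ |\ \varphi(u,v)=h(x,y)\}$, where $\varphi(f_1(a,b),f_2(a,b))=f_3(a,b)$, and both arguments use exactly the same three pairwise orthogonality relations among $f_1,f_2,f_3$ to verify well-definedness and the quasigroup property. You have merely re-parameterized $\varphi$ as an explicit ternary operation and written out the bijectivity checks in more detail.
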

\begin{proof} By results of \cite{EMullen} any MDS code correspond to
a system of orthogonal quasigroups. So $(x,y,u,v,w)\in M$ whenever
$\qquad \left\{
\begin{array}{l} u=f(x,y);\\
v=g(x,y);\\
w=h(x,y),\\
\end{array} \right.$

\noindent where $f,g,h$ determine a set of $3$ MOLS.

Determine $M'$ by equations $\qquad \left\{
\begin{array}{l} u=f(x,y);\\
v=g(x,y).\\
\end{array} \right.$

Define the function $\varphi:Q^2\rightarrow Q$ by
$\varphi(f(x,y),g(x,y))=h(x,y)$.
 The
orthogonality of $f$ and $g$ yields that the function $\varphi$ is
well defined; and the orthogonality  of $f$ and $h$, the
orthogonality of $g$ and $h$ provide that $\varphi$ is a quasigroup.
Hence the set $C=\{(x,y,u,v) \ | \varphi(u,v)=h(x,y)\}$ is an MDS
code and $M'\subset C$ by construction. \end{proof}

\begin{proposition}\label{progs11}{\rm \cite{Wilson}}
For every integer $t$ there is an integer  $k(t)$  such that for all
$k>k(t)$ there exists a set of $t$ MOLS of order $k$.

\end{proposition}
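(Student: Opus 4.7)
The plan is to combine the classical prime-power construction with MacNeish's multiplicative bound and then to close the gap via a PBD/transversal-design argument in the style of Wilson.

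First, I would recall the prime-power base case: for any prime power $q$ the affine plane $AG(2,q)$ produces $q-1$ MOLS of order $q$, so as soon as $q\geq t+1$ is a prime power, $t$ MOLS of order $q$ exist. Equivalently, by Proposition \ref{proAO0} with $s=2$, a transversal design $TD(t+2,q)$ exists for every prime power $q\geq t+1$.

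Second, I would apply MacNeish's product construction: if $N(m)$ denotes the maximum number of MOLS of order $m$, then $N(mn)\geq\min(N(m),N(n))$, obtained by taking the coordinatewise product of two MOLS families. Together with step one, this yields $N(k)\geq t$ whenever every maximal prime-power in the factorization of $k$ is at least $t+1$. This alone handles a positive-density set of $k$, but misses $k$ divisible by small prime powers such as $2,3,4,\dots$, so more is needed.

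Third, and this is the main obstacle, I would use the PBD/transversal-design closure argument of Wilson. The class of integers $n$ for which a $TD(t+2,n)$ exists is closed under the operation of ``filling groups'' in a truncated transversal design: given a $TD(t+2,m)$, deleting points from one group and replacing each block by a $TD(t+2,\text{block size})$ produces a $TD(t+2,n)$ for $n=\sum n_i$ whenever the block-size ingredients are admissible. One chooses $m$ to be a large prime power (step one) and uses the flexibility in truncation to realize any sufficiently large $n$ as such a sum where all ingredient sizes lie in a previously admissible set. Iterating this bootstrap with carefully selected ingredient sets, one shows that only finitely many $k$ are excluded, which gives the threshold $k(t)$.

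The hard step is the last one: verifying that Wilson's truncation plus filling really does cover all sufficiently large $k$, and not merely most of them, requires the quantitative PBD-closure machinery (or the density/sieve estimates of Chowla--Erd\H{o}s--Straus as an alternative route) — the prime-power and multiplicative steps are routine, but the asymptotic coverage is genuinely non-trivial and is precisely the content of Wilson's theorem being cited.
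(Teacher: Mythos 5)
The paper offers no proof of this proposition: it is imported verbatim from \cite{Wilson}, and the only additional remark made about it is the numerical fact that $k(6)\leq 74$ (via \cite{HCD}). So there is no internal argument to compare yours against. Measured against the literature, your outline has the right architecture: prime powers give $q-1$ MOLS via the affine plane, MacNeish's product extends this to integers all of whose maximal prime-power factors exceed $t$, and the remaining orders are reached by truncating and filling transversal designs (PBD-closure), or alternatively by the Chowla--Erd\H{o}s--Straus sieve. A small historical point: the qualitative statement as written (existence of $k(t)$) is due to Chowla, Erd\H{o}s and Straus; Wilson's cited contribution is the quantitative refinement $N(n)\geq n^{1/17}-2$.

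As a proof, however, your proposal has a genuine gap, and you name it yourself: the claim that the truncation-and-filling bootstrap covers \emph{all} sufficiently large $k$, rather than merely a dense set, is exactly the theorem to be proved, and your third step asserts it rather than establishes it. Nothing in the first two steps forces the ingredient block sizes of a truncated $TD(t+2,m)$ to land in the already-admissible set for every residue class of $n$; making that work requires choosing $m$ and the truncation so that the block sizes are tightly controlled (e.g.\ confined to $\{u,u+1\}$ for suitable $u$, using transversal designs with two or three truncated groups) and then running the induction on $t$ with explicit thresholds. Without that quantitative control the argument only yields $t$ MOLS for a positive-density set of orders. Since the paper treats the statement as a black box, deferring to \cite{Wilson} is the appropriate move here; if you want a self-contained proof, the missing content is precisely the closure lemma you flagged as the hard step.
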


Note that  $k(6)$ is not greater  than $74$ (see \cite{HCD}, Table
3.81).

A subset $T$ of an MDS code $M\subset Q^d$ is called a {\it subcode}
or a {\it component} of the code if $T$ is an MDS code  in
$A_1\times\dots\times A_d$ with the same code distance as $M$ and
$T=M\cap (A_1\times\dots\times A_d)$ where $A_i\subset Q$, $i\in
\{1,\dots,d\}$. Obviously $|A_1|=\dots=|A_d|$ and $|A_1|$ is the
order of the subcode $T$.
 A definition of a latin subsquare is analogous.

Let us now consider  possible orders of subcodes.  The following
proposition is well-known for case of pairs of orthogonal latin
squares (a case of MDS code with distance $\varrho=3$).
\begin{proposition}
If  an {\rm MDS} code $M\subset Q^d$ with code distance $\varrho$,
$d>\varrho\geq 3$, contains a proper subcode of order $\ell$ then
$\varrho \leq \ell\leq |Q|/\varrho$.
\end{proposition}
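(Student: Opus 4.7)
The plan is to argue by induction on $d$. For the inductive step, assume $d \geq \varrho + 2$ and fix any $a \in A_1$. By Proposition~\ref{proNM5}, the restriction $M_a = \{m \in M : m_1 = a\}$, viewed as a subset of $Q^{d-1}$, is an MDS code of distance $\varrho$ (its size $q^{d-\varrho}$ equals $q^{(d-1)-\varrho+1}$), and $T_a = T \cap M_a = M_a \cap (A_2 \times \cdots \times A_d)$ is a proper subcode of order $\ell$ (the last equality holds because any $m \in M_a$ with $m_i \in A_i$ for all $i \geq 2$ already lies in $A_1 \times \cdots \times A_d$, since $a \in A_1$, and hence in $T$). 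Applying the induction hypothesis reduces us to the base case $d = \varrho + 1$, where $|M| = q^2$, $|T| = \ell^2$, and the MDS property of distance $d - 1$ means any two distinct codewords of $M$ (or of $T$) agree in at most one coordinate.

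For the lower bound I fix any $t_0 \in T$ and classify the other $\ell^2 - 1$ codewords of $T$ by the (at most one) coordinate they share with $t_0$. For each of the $\varrho + 1$ coordinates, the MDS structure of $T$ contributes exactly $\ell - 1$ codewords sharing that coordinate with $t_0$, so $(\varrho + 1)(\ell - 1) \leq \ell^2 - 1$; rearranging yields $(\ell - 1)(\ell - \varrho) \geq 0$, and since the subcode definition forces $|T| \geq 2$ and hence $\ell \geq 2$, this gives $\ell \geq \varrho$.

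For the upper bound I double-count pairs $(t, m) \in T \times (M \setminus T)$ sharing at least one coordinate. Setting $I(m) = \{i : m_i \in A_i\}$, the key preliminary observation is that $|I(m)| \leq 1$ for every $m \in M \setminus T$: if there were two indices $i, j \in I(m)$, MDS of $T$ would produce $t \in T$ with $t_i = m_i$ and $t_j = m_j$, and MDS of $M$ (two shared coordinates determine the codeword) would force $m = t \in T$, a contradiction. Fixing $t \in T$, each of the $\varrho + 1$ coordinates contributes $q - \ell$ codewords of $M \setminus T$ agreeing with $t$ there, yielding $\ell^2 (\varrho + 1)(q - \ell)$ pairs total. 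Fixing instead $m$, the observation confines agreement to the unique index in $I(m)$ (if any), where exactly $\ell$ codewords of $T$ match; so the total also equals $\ell N_1$, with $N_1$ counting those $m \in M \setminus T$ having $|I(m)| = 1$. Equating gives $N_1 = \ell (\varrho + 1)(q - \ell)$, and combining with $N_1 \leq |M \setminus T| = q^2 - \ell^2$ produces $(q - \ell)(q - \ell \varrho) \geq 0$, hence $\ell \leq q/\varrho$. The main obstacle is establishing the preliminary observation on $|I(m)|$; once in hand, both bounds follow from short, parallel counting arguments.
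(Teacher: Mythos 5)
Your proof is correct, but it takes a genuinely different route from the paper's. The paper translates the code into a strongly orthogonal system of $t=\varrho-1$ quasigroups, specializes to $t$ MOLS of order $q$ containing $t$ aligned MOLS of order $\ell$ on an alphabet $B$, gets the lower bound $\ell\geq\varrho$ for free from the classical fact that at most $\ell-1$ MOLS of order $\ell$ exist, and proves the upper bound by counting cells of $(Q\setminus B)^2$ in the square array. You stay entirely on the code side: your induction on $d$ (sectioning by a coordinate value) plays the role of the paper's passage from strong orthogonality to plain MOLS --- both arguments reduce to the minimal length $d=\varrho+1$ --- and your two double counts are the codeword-side view of essentially the same incidence counts. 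What your version buys is self-containedness: the lower bound is re-derived by the same counting template as the upper bound instead of being quoted from MOLS theory, the key structural fact ($|I(m)|\leq 1$ for $m\in M\setminus T$, i.e., a word outside the subcode meets the sub-alphabets in at most one position) is isolated cleanly, and the argument degrades gracefully to $\varrho=2$, recovering the classical bound $\ell\leq q/2$ for latin subsquares, whereas the paper's counting needs two distinct orthogonal squares and hence $\varrho\geq 3$. One shared caveat: both proofs implicitly exclude the degenerate order $\ell=1$ (a singleton is formally $M\cap(A_1\times\dots\times A_d)$ with all $|A_i|=1$); you at least make the hypothesis $\ell\geq 2$ explicit at the point where you use it, which is exactly where the requirement that a subcode carry the same code distance (hence contain at least two words) enters.
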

\begin{proof}
By definition every strongly orthogonal system consisting of
$t=\varrho-1$ functions includes a  system $f_1,\dots,f_t$ of $t$
MOLS. A system of MOLS of order $\ell$ consists of not more than
$\ell-1$ latin squares. Therefore $t\leq \ell-1$. Without loss of
generality we can assume that the subcode includes a system of $t$
MOLS of order $\ell$ over the alphabet $B$. Denote by $b$ the
symbols of $B$ and by $a$ the other symbols. By the definition of
orthogonal system, for any pair $a$, $b$ and any distinct $i,j\in
\{1,\dots,t\}$, there exists $(u_1,u_2)\in (Q\setminus B)^2$ such
that $f_i(u_1,u_2)=a$, $f_j(u_1,u_2)=b$ and $f_{i'}(u_1,u_2)\in
Q\setminus B$ for $i'\neq i,j$. Thus $|(Q\setminus
B)^2|=(|Q|-\ell)^2\geq t\ell(|Q|-\ell)$.
 \end{proof}

 A Latin square $f$ is called symmetric  if
$f(x,y)=f(y,x)$ for each $x,y$. It is called unipotent if $f(x,x)=0$
for every $x$. By using the construction from \cite{Cameron} it is
easy to prove

\begin{proposition}\label{progs12}
Let $q$ be even and $\ell\leq q/4$. Then there is a symmetric
unipotent latin square of order $q$ with subsquare of order $\ell$
in $K_0\times K_1\times K_1$ and $K_1\times K_0\times K_1$, where
$K_0=\{0,\dots,\ell-1\}$ and $K_1=\{q-\ell,\dots,q-1\}$.
\end{proposition}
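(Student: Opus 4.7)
The plan is to reformulate the statement via the classical correspondence between symmetric unipotent Latin squares of even order $q$ and $1$-factorizations of the complete graph $K_q$: the symbol $c\neq 0$ labels the perfect matching $\{\{x,y\}:f(x,y)=c\}$, while $0$ is reserved for the diagonal. Under this correspondence, the prescribed subsquare structure becomes the statement that the $\ell$ factors indexed by the $\ell$ symbols of $K_1$ together cover the complete bipartite graph between $K_0$ and $K_1$, each such factor inducing a perfect matching between $K_0$ and $K_1$.

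I would build such a $1$-factorization in three steps. First, choose any Latin square $L:K_0\times K_1\to K_1$ of order $\ell$; the preimages of the $\ell$ symbols yield $\ell$ edge-disjoint perfect matchings between $K_0$ and $K_1$, indexed by $K_1$. Second, using that $\ell\le q/4$ gives $|M|=q-2\ell\ge 2\ell$ where $M:=Q\setminus(K_0\cup K_1)$, extend each of these $\ell$ bipartite matchings to a perfect matching of $K_q$ by adjoining a matching inside $M$, choosing the extensions so that they remain pairwise edge-disjoint; this is exactly the content of Cameron's construction, which produces $1$-factorizations containing a prescribed partial sub-$1$-factorization. Third, $1$-factorize the complementary graph (the edges inside $K_0$, inside $K_1$, the edges between $K_0\cup K_1$ and $M$, and the as-yet-unused edges inside $M$) using the remaining $q-\ell-1$ colors; this is routine once the first $\ell$ factors have been fixed compatibly, since any $1$-factorization of $K_{2\ell}$ on $K_0\cup K_1$ avoiding the $\ell$ bipartite factors, together with the left-over matchings of $K_{q-2\ell}$ on $M$, can be stitched together by a Latin rectangle between the two parts. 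Setting $f(x,y)$ equal to the color of $\{x,y\}$ for $x\ne y$ and $f(x,x)=0$ then produces a symmetric unipotent Latin square whose restriction to $K_0\times K_1$ takes values in $K_1$; the analogous property for $K_1\times K_0$ follows by symmetry.

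The main obstacle is the compatibility step: the $\ell$ matchings chosen inside $M$ to complete the $K_1$-colored factors must leave enough edges of the complete graph on $M$ so that the remaining $q-\ell-1$ colors can be used to $1$-factorize the leftover graph. This is exactly where the hypothesis $\ell\le q/4$ enters quantitatively---it supplies the slack $|M|\ge 2\ell$ that Cameron's construction requires to keep all color classes pairwise edge-disjoint. Once the $1$-factorization is in hand, symmetry, unipotency, and the prescribed subsquare structure of the associated Latin square are immediate from the construction.
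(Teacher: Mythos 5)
Your reduction of the statement to a $1$-factorization problem for $K_q$ is the right move, and it is the approach the paper itself gestures at (the paper gives no proof, only a pointer to Cameron's construction). Your first two steps are sound: decomposing the complete bipartite graph between $K_0$ and $K_1$ into $\ell$ perfect matchings via a Latin square, and extending them disjointly into $M=Q\setminus(K_0\cup K_1)$, which works because $K_{|M|}$ has $|M|-1=q-2\ell-1\ge 2\ell-1\ge\ell$ pairwise disjoint perfect matchings. The genuine gap is the third step, which is where all the difficulty of the proposition lives, and your justification of it fails. A ``$1$-factorization of $K_{2\ell}$ on $K_0\cup K_1$ avoiding the $\ell$ bipartite factors'' would have to decompose $K_{K_0}\cup K_{K_1}$ into perfect matchings of the $2\ell$-element set $K_0\cup K_1$; each such matching induces a perfect matching of $K_{K_0}$, which does not exist when $\ell$ is odd. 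So your scheme breaks for every odd $\ell$, while the proposition (and its use in Theorem~\ref{thgs2}, where $\ell$ is essentially an arbitrary prime power near $q/4$) carries no parity restriction. Moreover, even for even $\ell$ the ``stitching by a Latin rectangle'' does not typecheck: a colour class that matches all of $K_0\cup K_1$ into $M$ covers only $2\ell$ of the $q-2\ell\ge 2\ell$ vertices of $M$ and must therefore be completed by a \emph{partial} matching on the remaining $q-4\ell$ vertices of $M$, not by one of the whole ``left-over matchings of $K_{q-2\ell}$''; the leftover perfect matchings of $M$ and the cross Latin rectangle cannot simply be paired off.

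What the third step actually requires is a completion theorem for partial proper edge-colourings of $K_q$ (equivalently, for incomplete symmetric unipotent Latin squares): one must show that the $\ell$ factors fixed in steps one and two extend to a full $1$-factorization, with the hypothesis $\ell\le q/4$ playing the role of the usual ``$n\ge 2m$'' condition in Cameron/Andersen--Hilton-type embedding results. Either invoke such a theorem explicitly and verify its hypotheses, or give a fully explicit construction (for $q=2^k$ and $\ell=2^j$ the Cayley table of $\mathbb{Z}_2^k$ with $K_1$ a coset of the subgroup $K_0$ already does the job, but the general even $q$ and arbitrary $\ell$ do not reduce to that case). As written, the decisive step is asserted rather than proved.
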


\section{Lower bound for the number of MDS codes}

Let $Q$ be a finite field. An MDS code $C$ is called {\it linear
(affine)} if it is a linear (or affine) subspace of $Q^d$. In this
case the functions $f_1,\dots,f_t$ (defined in Proposition
\ref{proAO0}) are linear and rank of the code is equal to ${\rm
dim}(C)= s$. Let $F$ be a subfield of a finite field $Q$ and
$|Q|=|F|^k$. Then we can consider $Q$ as $k$-dimensional vector
space over $F$. We will call $C\subset Q^d$ a linear code over $F$
if it is linear (i.~e.
$f_i=\alpha_{1i}x_{1}+\dots+\alpha_{di}x_{d}$) and all coefficients
$\alpha_{ji}$ ($j=1,\dots,d$, $i=1,\dots,t$) are in $F$. For $a,v\in
Q$ denote by $L(a,v)=\{a+\alpha v \ |\ \alpha\in F\}$ an
$1$-dimensional affine subspace in $Q$.
A number of different $L(a,v)$ for fixed $a\in Q$ is equal to the
number of 1-dimensional linear subspace of $F^k$, i.~e.
$\frac{|F|^k-1}{|F|-1}$.

By using a well-known construction of a linear MDS code (\cite{MS},
Chapters 10,11) with matrix  over prime subfield $GF(p)$ we can
conclude that the following proposition is true.

\begin{proposition}\label{proNMb}
Let $p$ be a prime number, let $d,k$ be integers, and  let
$|Q|=p^k$. Then

\noindent {\rm (a)} for each  $\varrho \in \{2,d\}$ there exists a
linear over $GF(p)$ {\rm MDS} code $C\subset Q^d$ with code distance
$\varrho$,

\noindent {\rm (b)} for each  integers $d\leq p+1$  and $\varrho$,
$3\leq \varrho\leq p$, there exists a linear over $GF(p)$ {\rm MDS}
code $C\subset Q^d$ with code distance $\varrho$.
\end{proposition}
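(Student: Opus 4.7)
The plan is to exhibit, for each prescribed pair $(d,\varrho)$, an explicit parity-check matrix $H$ of size $(\varrho-1)\times d$ with entries in $GF(p)$ and to verify that every $\varrho-1$ columns of $H$ are linearly independent. Setting $C=\{x\in Q^d: Hx=0\}$ then yields a code of cardinality $|Q|^{d-\varrho+1}$, of distance $\varrho$ by the Singleton bound, and hence an MDS code by Proposition \ref{proNM5}; its linearity over $GF(p)$ is immediate from the choice of $H$.

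For part (a), the case $\varrho=2$ uses the sum-zero code: the $1\times d$ matrix $(1,\dots,1)$ has entries in $GF(p)$ and every individual entry is nonzero, so the distance equals $2$. The case $\varrho=d$ uses the repetition code: the check matrix with rows $e_i-e_{i+1}$, $i=1,\dots,d-1$, has entries in $\{-1,0,1\}\subset GF(p)$, and any $d-1$ of its columns are linearly independent because the only vector in the kernel is a constant vector.

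For part (b), I will choose $d$ distinct evaluation points $\alpha_1,\dots,\alpha_d\in GF(p)\cup\{\infty\}$, which is possible because $d\le p+1$, and let $H$ be the $(\varrho-1)\times d$ matrix with $H_{ij}=\alpha_j^{i-1}$ under the convention that a column corresponding to $\infty$ is the unit vector $(0,\dots,0,1)^{T}$. The main step is to check that every $(\varrho-1)\times(\varrho-1)$ submatrix of $H$ is nonsingular. If all selected columns are finite, the submatrix is an honest Vandermonde with determinant $\prod_{a<b}(\alpha_{j_b}-\alpha_{j_a})\neq 0$ since the $\alpha_j$ are distinct elements of $GF(p)$; if one of the selected columns is the $\infty$-column, cofactor expansion along that column reduces the computation to a smaller Vandermonde in the remaining selected $\alpha_j$'s.

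The only real obstacle is the notational bookkeeping around the $\infty$ evaluation point, which is handled uniformly by the convention above (equivalent to viewing the evaluation points as living on the projective line over $GF(p)$). Once that convention is fixed, the nonsingularity of all $(\varrho-1)$-column submatrices is the classical Reed--Solomon argument, and every remaining verification is routine linear algebra.
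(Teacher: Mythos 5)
Your construction is correct and is exactly the ``well-known construction of a linear MDS code with matrix over the prime subfield $GF(p)$'' that the paper invokes by citing \cite{MS}, Chapters 10--11, without writing out details: the sum-zero and repetition codes for $\varrho\in\{2,d\}$, and the (extended) Reed--Solomon parity-check matrix with $d\le p+1$ evaluation points on the projective line over $GF(p)$ for $3\le\varrho\le p$, with nonsingularity of the $(\varrho-1)\times(\varrho-1)$ minors carrying over from $GF(p)$ to $Q$ since the Vandermonde determinants are nonzero in the subfield. So the proposal fills in, correctly, precisely the argument the paper delegates to the reference.
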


If $2<\varrho<d$   then the length $d$ of  a linear MDS code of
order $p^k$ with code distance $\varrho$ does not exceed $p^k+1$ for
$p\neq 2$ or $p^k+2$ for $p=2$ (see \cite{Ball}, \cite{BallBeu}).


\begin{proposition}\label{proNMa}
Assume $C$ is a code with a subcode $C_1$ of order $\ell$ and  a
code $C_2$ has the same parameters as $C_1$. Then it is possible to
exchange $C_1$ by $C_2$ in $C$ and to obtain the code $C'$ with the
same parameters as $C$.
\end{proposition}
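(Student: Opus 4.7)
The plan is to verify directly that $C' := (C \setminus C_1) \cup C_2$ satisfies the defining MDS property from Proposition~\ref{proNM5}, by performing a case analysis on axis-aligned planes. Here ``same parameters'' for $C_1$ and $C_2$ should be read as: both are MDS codes in the same box $A_1 \times \cdots \times A_d$ with the same code distance $\varrho$ as $C$. Since they have the same order $\ell$ and the same distance, by Proposition~\ref{proNM5} they automatically have the same cardinality, so $|C'| = |C|$.

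Let $\varrho$ denote the code distance of $C$, and let $\Gamma$ be any $(\varrho - 1)$-dimensional axis-aligned plane in $Q^d$, specified by fixing coordinates $x_{i_j} = c_j$ for $j = 1, \dots, d-\varrho+1$. I would split into two cases. \emph{Case 1:} some fixed value $c_j$ lies outside $A_{i_j}$. Then $\Gamma$ misses the box $A_1 \times \cdots \times A_d$ entirely, so $C_1 \cap \Gamma = C_2 \cap \Gamma = \emptyset$, and hence $C' \cap \Gamma = C \cap \Gamma$, which has exactly one element. \emph{Case 2:} every $c_j \in A_{i_j}$. Then $\Gamma \cap (A_1 \times \cdots \times A_d)$ is a $(\varrho-1)$-dimensional axis-aligned plane inside the smaller box.

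The key observation for Case~2 is that the unique codeword of $C$ on $\Gamma$ must in fact lie in $C_1$. Indeed, since $C_1$ is an MDS subcode of the same distance in $A_1 \times \cdots \times A_d$, the plane $\Gamma \cap (A_1 \times \cdots \times A_d)$ meets $C_1$ in exactly one point $x$; but $x \in C \cap \Gamma$ and the latter also has cardinality one, so $C \cap \Gamma = \{x\} \subset C_1$. Consequently $(C \setminus C_1) \cap \Gamma = \emptyset$. On the other hand, applying the MDS property of $C_2$ inside the same box gives $|C_2 \cap \Gamma| = 1$. Therefore $|C' \cap \Gamma| = 1$, as required.

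This handles all axis-aligned planes of dimension $\varrho - 1$, so by Proposition~\ref{proNM5} the set $C'$ is an MDS code with the same length, alphabet, and distance as $C$. The only subtle point — and what I expect to be the main conceptual step rather than an obstacle — is the argument in Case~2 that the unique codeword of $C$ on $\Gamma$ is forced into $C_1$; this is what makes the switching local and ensures that removing $C_1$ cleanly decouples the inside of the box from the rest of $C$.
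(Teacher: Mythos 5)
Your proof is correct: the case analysis on whether the fixed coordinates of the plane $\Gamma$ all lie in the box $A_1\times\cdots\times A_d$, together with the observation that in the latter case the unique codeword of $C$ on $\Gamma$ is forced into $C_1$, is exactly the argument needed. The paper itself states Proposition~\ref{proNMa} without proof, treating it as a standard switching fact (citing \cite{Ost}), so your write-up simply supplies the routine verification the author omitted; the only point worth flagging is your (reasonable and necessary) reading of ``same parameters'' as meaning that $C_2$ is an MDS code of the same distance in the \emph{same} box $A_1\times\cdots\times A_d$ as $C_1$.
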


 It is said the codes $C$ and $C'$ obtained from each other by
switching \cite{Ost}. If a code has nonintersecting subcodes then it
is possible to apply switching  independently to each of the
subcodes.

From the definition of an MDS code and Proposition \ref{proNMb} we
obtain:

\begin{proposition}\label{proNM0}
Let $C\subset Q^d$ be a linear MDS code over $F$,
$(a_1,\dots,a_d)\in C$, $v\in Q\setminus \{0\}$. Then $C\cap
(L(a_1,v)\times\dots\times L(a_d,v))$ is a subcode of $C$ of order
$|F|$.
\end{proposition}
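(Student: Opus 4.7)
The plan is to parameterize $T := C \cap (L(a_1,v) \times \cdots \times L(a_d,v))$ through the affine bijection
\[
\phi : F^d \to L(a_1,v) \times \cdots \times L(a_d,v), \qquad \phi(\alpha) = (a_1 + \alpha_1 v, \dots, a_d + \alpha_d v),
\]
and to show that $\phi^{-1}(C)$ is a linear MDS code in $F^d$ with the same code distance $\varrho$ as $C$. Since $\phi$ sends $F$ bijectively onto $L(a_j,v)$ in every coordinate $j$, transporting the MDS property back along $\phi$ exhibits $T$ as an MDS code in a product of $|F|$-element alphabets, and hence as a subcode of $C$ of order $|F|$.

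The first step is to describe $C$ by parity checks. Since $C$ is $F$-linear of rank $s = d - t$ (with $t = \varrho - 1$), it is the kernel of a $t \times d$ matrix $H$ with entries in $F$; indeed, the functions $f_i$ from Proposition \ref{proAO0} have coefficients in $F$, so the equations $x_{s+i} = f_i(x_1,\dots,x_s)$ yield such an $H$. Using $(a_1,\dots,a_d) \in C$ (so $Ha = 0$) together with $v \neq 0$ in the field $Q$, a direct substitution gives
\[
\phi(\alpha) \in C \ \Longleftrightarrow\ H(\alpha v) = 0 \ \Longleftrightarrow\ (H\alpha)\,v = 0 \ \Longleftrightarrow\ H\alpha = 0,
\]
so $\phi^{-1}(C)$ is precisely the $F$-linear code in $F^d$ with the same parity-check matrix $H$, now regarded over $F$.

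The second step exploits the invariance of ranks of submatrices under the field extension $F \subset Q$. Because $C$ is MDS of distance $\varrho$ over $Q$, every $t$ columns of $H$ are $Q$-linearly independent and hence also $F$-linearly independent, while $H$ itself has $F$-rank $t$. Consequently $|\phi^{-1}(C)| = |F|^{d-t} = |F|^{d-\varrho+1}$ and the $F$-minimum distance of $\phi^{-1}(C)$ is at least $\varrho$; combined with Proposition \ref{proNM5} this forces the distance to be exactly $\varrho$, so $\phi^{-1}(C)$ is MDS in $F^d$. Pushing this forward through $\phi$ makes $T$ an MDS code in $L(a_1,v) \times \cdots \times L(a_d,v)$ of distance $\varrho$, which by the subcode definition means $T$ is a subcode of $C$ of order $|L(a_j,v)| = |F|$. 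There is no genuine obstacle here; the only conceptual point is the rank-invariance under extension of scalars, which is what makes the MDS property for $C$ over $Q$ automatically descend to $\phi^{-1}(C)$ over $F$.
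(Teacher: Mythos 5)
Your proof is correct and follows the same route the paper implicitly intends (the paper states Proposition \ref{proNM0} without proof, as a consequence of the definition and Proposition \ref{proNMb}): after translating by the codeword $(a_1,\dots,a_d)$ and scaling by $v$, the intersection becomes the kernel over $F$ of the same parity-check matrix, whose MDS property descends from $Q$ to $F$ because ranks of submatrices are unchanged under extension of scalars. Your write-up simply supplies the details the paper omits, and all steps (in particular $H(\alpha v)=(H\alpha)v=0\Leftrightarrow H\alpha=0$ and the Singleton-bound argument via Proposition \ref{proNM5}) are sound.
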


If $C_1=C\cap (L(a_1,v)\times\dots\times L(a_d,v))$ then we can
consider $C_2=C_1+(\alpha v,0,\dots,0)$, $\alpha\in F$. It is clear
that $C_2\subset L(a_1,v)\times\dots\times L(a_d,v)$ and $C_2$ has
the same parameters as $C_1$. We will say that code $(C\setminus
C_1)\cup C_2$ obtained from $C$ by a switching of type (I).

For example consider a pair of orthogonal latin squares of order $9$
below. Two subcodes (orthogonal subsquares) are marked by boldface
and italic typeface. \vskip5mm

$\begin{array}{|c|c|c|c|c|c|c|c|c|}
\hline {\bf 0}&1&2&3&{\bf 4}&5&6&7&{\bf 8} \\
\hline
1&{\it 2}&0&{4}&5&{\it 3}&{\it7}&8&{ 6} \\
 \hline
 2&0&1&5&3&4&8&6&7\\
\hline 3&{ 4}&5&{6}&7&8&0&1&{ 2} \\
\hline
{\bf 4}&5&3&7&{\bf 8}&6& 1&2&{\bf 0} \\
 \hline
5&{\it 3}&4&8&6&{\it 7}&{\it 2}&0&1 \\
\hline 6&{\it 7}&8&0&1&{\it 2}&{\it 3}&4&5 \\
\hline
7&8&6&1&2&0&4&5&3 \\
 \hline
{\bf 8}&{ 6}&7& 2&{\bf 0}&1&5&3&{\bf 4}\\
\hline
\end{array}$ \hskip10mm
$\begin{array}{|c|c|c|c|c|c|c|c|c|}
\hline {\bf 0}&1&2&3&{\bf 4}&5&6&7&{\bf 8} \\
\hline
 2&{\it 0}&1&5&3&{\it 4}&{\it 8}&6&7\\
 \hline
1&2&0&4&5&3&7&8&6 \\
\hline 6&7&8&0&1&2&3&4&5 \\
\hline
{\bf 8}&6&7& 2&{\bf 0}&1&5&3&{\bf 4}\\
 \hline
7&{\it 8}&6&1&2&{\it 0}&{\it 4}&5&3 \\
\hline 3&{\it 4}&5&6&7&{\it 8}&{\it 0}&1&2 \\
\hline
5&3&4&8&6&7&2&0&1 \\
 \hline
{\bf 4}&5&3&7&{\bf 8}&6& 1&2&{\bf 0} \\
\hline
\end{array}$
\vskip1mm

Below we can see a result of switching.

$\begin{array}{|c|c|c|c|c|c|c|c|c|}
\hline {\bf 0}&1&2&3&{\bf 4}&5&6&7&{\bf 8} \\
\hline
1&{\it 2}&0&{4}&5&{\it 7}&{\it3}&8&{ 6} \\
 \hline
 2&0&1&5&3&4&8&6&7\\
\hline 3&{ 4}&5&{6}&7&8&0&1&{ 2} \\
\hline
{\bf 4}&5&3&7&{\bf 8}&6& 1&2&{\bf 0} \\
 \hline
5&{\it 7}&4&8&6&{\it 3}&{\it 2}&0&1 \\
\hline 6&{\it 3}&8&0&1&{\it 2}&{\it 7}&4&5 \\
\hline
7&8&6&1&2&0&4&5&3 \\
 \hline
{\bf 8}&{ 6}&7& 2&{\bf 0}&1&5&3&{\bf 4}\\
\hline
\end{array}$ \hskip10mm
$\begin{array}{|c|c|c|c|c|c|c|c|c|}
\hline {\bf 0}&1&2&3&{\bf 8}&5&6&7&{\bf 4} \\
\hline
 2&{\it 0}&1&5&3&{\it 4}&{\it 8}&6&7\\
 \hline
1&2&0&4&5&3&7&8&6 \\
\hline 6&7&8&0&1&2&3&4&5 \\
\hline
{\bf 4}&6&7& 2&{\bf 0}&1&5&3&{\bf 8}\\
 \hline
7&{\it 8}&6&1&2&{\it 0}&{\it 4}&5&3 \\
\hline 3&{\it 4}&5&6&7&{\it 8}&{\it 0}&1&2 \\
\hline
5&3&4&8&6&7&2&0&1 \\
 \hline
{\bf 8}&5&3&7&{\bf 4}&6& 1&2&{\bf 0} \\
\hline
\end{array}$

\vskip5mm

Let $N(q,d,\varrho)$ be the number of  MDS codes of order $q$  with
code distance $\varrho$ and length $d$.

\begin{theorem}\label{thsqs1}
For each prime number $p$ and {\rm (a)} any  $d\leq p+1$ if
$3\leq\varrho\leq p$\\  or  {\rm (b)} arbitrary $d\geq 2$ if
$\varrho =2$ it holds
$$\ln N(p^k,d,\varrho)\geq (1+o(1))(k+m)p^{(k-2)m-1}\ln p=(1+o(1))p^{km}\ln p^k$$
as $k\rightarrow\infty$, $m=d-\varrho+1$.
\end{theorem}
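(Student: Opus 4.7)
The plan is to build an exponentially large family of distinct MDS codes by applying many disjoint type-(I) switchings to a single linear template, and then to verify that distinct switching records produce distinct codes. Under the stated hypotheses on $d$ and $\varrho$, Proposition~\ref{proNMb} provides a linear MDS code $C_0 \subset Q^d$ over $F = GF(p)$ of order $|Q| = p^k$ and distance $\varrho$, with $|C_0| = p^{km}$ where $m = d-\varrho+1$. This will be the seed from which the whole family is generated.

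Proposition~\ref{proNM0} supplies the switching components: for each codeword $a \in C_0$ and each nonzero $v \in Q$, the intersection $C_0 \cap (L(a_1,v) \times \cdots \times L(a_d,v))$ is an order-$p$ subcode. Fixing a direction $v$ partitions $C_0$ into $|C_0|/p = p^{km-1}$ disjoint blocks, and on each block there are $p$ possible type-(I) switches (shifts of the block by $(\alpha v, 0, \ldots, 0)$, $\alpha \in F$). Choosing a switch independently on each block of this partition already yields $p^{p^{km-1}}$ candidate codes from this single direction, contributing $p^{km-1}\ln p$ to the logarithmic count. To reach the claimed order $p^{km}\ln p^k$ I would iterate the procedure: a code obtained after switching remains MDS and is still decomposable along directions $v' \notin Fv$, so the construction can be compounded across a family of roughly $k$ genuinely independent directions selected from the $(p^k-1)/(p-1)$ one-dimensional $F$-subspaces of $Q$; each new scale multiplies the count by a further block-by-block factor, producing the extra multiplicative factor of order $k$ in the logarithm.

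Distinctness is certified by recovering, from a given output code $C$, which switch was applied in each cell: on every affine box $L(a_1,v)\times\cdots\times L(a_d,v)$, comparison with $C_0$ determines the shift used there, and switches from different directions are peeled off in the reverse order of application. The main obstacle I expect is the bookkeeping in this amplification step: partitions induced by different directions overlap rather than refine one another, so one must either restrict to a subfamily of switching moves that remain mutually disjoint or argue that non-disjoint compositions are still uniquely decodable. Pinning down the exact exponent $(k+m)p^{(k-2)m-1}\ln p$ rather than the crude single-direction bound $p^{km-1}\ln p$ rests on this careful choice of a ``switching schedule'' that maximizes the number of independent moves while preserving recoverability of the input switching record.
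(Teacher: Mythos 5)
Your setup (a linear seed code from Proposition \ref{proNMb}, order-$p$ subcodes from Proposition \ref{proNM0}, type-(I) switchings, and recovery of the switching record by comparison with the seed) matches the paper's, but the step that actually produces the stated bound is missing, and the route you sketch for it does not work. First, a fixed direction $v$ partitions $C_0$ into $|C_0|/p^{m}=p^{(k-1)m}$ subcodes, not $p^{km-1}$: each subcode is an MDS code of order $p$ and distance $\varrho$ inside its box, hence has $p^{d-\varrho+1}=p^{m}$ codewords. Switching every block of this single partition independently therefore contributes only $p^{(k-1)m}\ln p=p^{-m}\,p^{km}\ln p$ on the logarithmic scale, which falls short of the target $(1+o(1))(k+m)p^{(k-2)m-1}\ln p=\Theta(k\,p^{km}\ln p)$ by a factor of order $k$; since the entire content of the theorem is the factor $\ln p^{k}$ rather than $\ln p$, the single-direction count is genuinely insufficient, not merely crude. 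Second, your proposed amplification --- composing switching rounds over roughly $k$ different directions --- is exactly where the argument breaks: after the first round the code is no longer linear over $F$, so Proposition \ref{proNM0} no longer guarantees that the boxes $L(a_1,v')\times\cdots\times L(a_d,v')$ in a new direction meet the modified code in subcodes at all, and the ``peel off in reverse order'' decoding has no canonical order to peel. You correctly flag this as the main obstacle, but the theorem is not proved until it is resolved.

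The paper resolves it without ever composing rounds. It pools the subcodes over \emph{all} $\frac{p^k-1}{p-1}$ directions simultaneously (more than $p^{k(1+m)-1-m}$ subcodes in total), greedily selects $t=(1-o(1))p^{(k-2)m-1}$ of them that are pairwise disjoint (each subcode meets fewer than $p^{m+k}$ others, so at every step a proportion $\varepsilon(k)$ of the pool, i.e.\ $w=\varepsilon(k)p^{k(1+m)-1-m}$ subcodes, remains available), and switches each chosen subcode in one of $p$ ways. All switchings are applied to the original linear code, so Propositions \ref{proNM0} and \ref{proNMa} suffice, and distinctness holds because each pair consisting of an original word and its switched image identifies the subcode that was used. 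The factor $k+m$ then comes not from iterating directions but from the entropy of \emph{which} $t$-element family of disjoint subcodes was chosen: $\ln\bigl(p^{t}w^{t}/t!\bigr)\approx t\bigl(\ln p+\ln(w/t)\bigr)=(1+o(1))\,t\,(k+m)\ln p$. If you want to salvage your plan, this is the mechanism to adopt: keep a single round of switching, but extract the missing factor of $k$ from counting the choices of the disjoint family rather than from stacking partitions taken in different directions.
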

\begin{proof}
Consider a linear  MDS code $C\subset Q^d$, $|Q|=p^k$, over a prime
field with rank $m$ and length $d$ (see Proposition \ref{proNMb}).
Each codeword in $C$ lies in $\frac{p^k-1}{p-1}$ different subcodes
determined by the choice of $v$ (Proposition \ref{proNM0}). The
number of different subcodes
 is equal to the product of $p^{km}$
(the cardinality of $C$) and $\frac{p^k-1}{p-1}$ divided by $p^m$
(the cardinality of subcodes). This number is greater than
$p^{k(1+m)-1}/p^m$. Each subcode intersects with
$\frac{p^k-1}{p-1}p^m$ other subcodes, which is less  than
$p^{m+k}$. Thus we can choose
$t=(1-\varepsilon(k))(p^{k(1+m)-1}/p^{2m+k})$ subcodes so that each
new subcode does not intersect with the previously selected
subcodes. There $\varepsilon(k)$ is the proportion of unimpaired
subcodes. We suppose that $\varepsilon(k)=o(1)$ and
$\ln\varepsilon(k)=o(k)$, for example, $\varepsilon(k)=\frac1k$.
Consequently, for choosing of each subcode we have more than
$w=\varepsilon(k)(p^{k(1+m)-1}/p^m)$ alternatives. By Proposition
\ref{proNMa} the code obtained by switchings of this mutually
disjoint subcodes has the same parameters as the origin code $C$.
All $t$-element sets of switchings of type (I) form different codes
because from any pair of an initial vector $(a_1,a_2,\dots,a_d)$ and
switched vector $(a_1+\alpha v,a_2,\dots,a_d)$ we can find the
switching subcode. Then $N(p^k,d,\varrho)$ is greater than
$p^tw^t/t!$, where $p$ is the number of different switchings of type
(I). Applying Stirling's formula, we get the lower bound on
$N(p^k,d,\varrho)$.
\end{proof}

\begin{proposition}\label{proNM1}{\rm \cite{KPS}}
For every integers $q,\ell,d_0$, $\ell\leq q/2$, there exists a
latin $d_0$-cube of order $q$ with a latin $d_0$-subcube of order
$\ell$.
\end{proposition}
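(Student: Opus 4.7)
The plan is to reduce the general-dimensional problem to the classical two-dimensional one and then bootstrap by iterating quasigroup multiplication. For $d_0=2$ the statement is a well-known consequence of Ryser's completion criterion: an $\ell\times\ell$ latin square on an alphabet $A\subset Q$ can be extended to a latin square of order $q$ whenever $2\ell\leq q$ (the criterion $N(i)\geq 2\ell - q$ is satisfied with equality for $i\notin A$). I would view the resulting square as the Cayley table of a quasigroup $(Q,\ast)$; the presence of the subsquare means $A$ is closed under $\ast$, so $(A,\ast)$ is a subquasigroup of order $\ell$.

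Given $(Q,\ast)$ with subquasigroup $(A,\ast)$, I would define the $d_0$-dimensional array by the left-associated product
\[
f(x_1,\dots,x_{d_0})=(\cdots((x_1\ast x_2)\ast x_3)\cdots)\ast x_{d_0}.
\]
If every $x_i$ lies in $A$, closure of $A$ forces every intermediate product to lie in $A$, so in particular $f(A^{d_0})\subseteq A$. The restriction of $f$ to $A^{d_0}$ is obtained by the same formula read inside $(A,\ast)$ and is itself a latin $d_0$-cube of order $\ell$ (by the same verification applied to $(A,\ast)$), supplying the desired subcube.

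The one real step is verifying that $f$ is a latin $d_0$-cube at all, i.e., that every two-dimensional subarray is a latin square. Fix $d_0-2$ coordinates at constants and let $x_i,x_j$ with $i<j$ be the free variables; track the partial products $y_k=(\cdots(x_1\ast x_2)\cdots)\ast x_k$. For $k<i$ the product $y_k$ is a constant; for $i\leq k<j$, right multiplication by a constant being a bijection, $y_k$ is a bijection in $x_i$ alone; at $k=j$ one gets $y_j=g(x_i)\ast x_j$ for some bijection $g$, which is manifestly a latin square in $(x_i,x_j)$; and for $k>j$ each further step applies a fixed permutation to the entries of this latin square, which preserves the latin-square property. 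Hence $f=y_{d_0}$ is a latin $d_0$-cube. The main obstacle is purely organizational bookkeeping of these cases; the underlying quasigroup identities are trivial.
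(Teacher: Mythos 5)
Your proof is correct. Note that the paper itself gives no argument for this proposition---it is stated as a citation to \cite{KPS}---so there is no in-paper proof to compare against; your two ingredients are exactly the natural ones. The base case is the Ryser--Evans embedding theorem (a latin square of order $\ell$ embeds in one of order $q$ iff $q\geq 2\ell$, which is where the hypothesis $\ell\leq q/2$ enters and is sharp), and the lift to dimension $d_0$ by the iterated superposition $f(x_1,\dots,x_{d_0})=(\cdots(x_1\ast x_2)\cdots)\ast x_{d_0}$ of a quasigroup with a subquasigroup on $A$ is the standard way reducible $d_0$-ary quasigroups are built in \cite{KPS}; your case analysis on the partial products $y_k$ correctly verifies that every $2$-dimensional subarray is a latin square (constant prefix, then a bijection of $x_i$, then a row-permuted copy of the Cayley table in $(x_i,x_j)$, then symbol permutations), and closure of $A$ under $\ast$ gives the subcube on $A\times\cdots\times A$ as required by the paper's definition of a subcode.
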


\begin{corol}\label{thgs1}
The logarithm of the  number  of latin $d_0$-cubes of order $q$ is
$\Theta(q^{d_0}\ln q)$ as $q\rightarrow\infty$.
\end{corol}

The lower bound comes from Theorem \ref{thsqs1}  and Proposition
\ref{proNM1}, the upper bound is trivial.

\begin{proposition}\label{proNM2}{\rm \cite{Zhu}}
For every integers $q,\ell\not \in \{1,2,6\}$, $\ell\leq q/3$, there
exists a pair of orthogonal latin squares of order $q$ with
orthogonal latin subsquares of order $\ell$.
\end{proposition}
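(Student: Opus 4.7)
The plan is to translate the statement into the language of incomplete transversal designs and then assemble it from known ingredients. A pair of MOLS of order $q$ containing an orthogonal pair of sub-squares of order $\ell$ is the same object as an incomplete transversal design $\mathrm{ITD}(4,q;\ell)$: the four groups are the rows, the columns, the symbols of the first square, and the symbols of the second square, the blocks are the quadruples $(i,j,L_1(i,j),L_2(i,j))$, and the required sub-MOLS is exactly a $\mathrm{TD}(4,\ell)$ sitting on a common ``hole'' of size $\ell$ in every group. So the task becomes: show $\mathrm{ITD}(4,q;\ell)$ exists whenever $q\ge 3\ell$, $\ell\notin\{1,2,6\}$. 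The necessary inequality $q\ge 3\ell$ is immediate from counting the symbols outside the hole that appear in the rows of the hole.

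Next I would produce the ITDs by Wilson-style recursive construction. Start from a ``master'' $\mathrm{TD}(4,m)$ and apply the weighting construction: replace each point by a block of $w$ points and each master block of size 4 by an ingredient $\mathrm{TD}(4,w)$; this yields a $\mathrm{TD}(4,mw)$. To produce a hole, one designates a sub-TD of the master and uses weighting compatible with the hole, so that the resulting design has a $\mathrm{TD}(4,\ell)$ subdesign. Standard input is the truncated $\mathrm{TD}(4,m)$ from Proposition \ref{progs11} for large enough $m$ together with explicit small $\mathrm{TD}(4,w)$s; the exceptional orders $w\in\{2,6\}$ are exactly the cases where $\mathrm{TD}(4,w)$ fails, which is where the forbidden values $\ell\in\{2,6\}$ in the statement come from.

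For the boundary range $3\ell\le q\le c\ell$ that the recursion does not reach, I would use direct constructions over finite fields: if $q$ is a prime power, take the two MOLS coming from $x+\alpha y$ with $\alpha\in\mathrm{GF}(q)^*\setminus\{1\}$, and arrange the hole $B$ to be an additive subgroup so that the sub-square of order $\ell=|B|$ on $B\times B$ is again of the form $x+\alpha y$ restricted to $B$; this automatically produces orthogonal sub-squares. The remaining orders $q$ (composite, non-prime-power, not far above $3\ell$) are handled either by the singular direct product construction of Bose--Shrikhande--Parker, or by deleting points from a larger pair of MOLS of prime-power order and patching the remainder with a smaller known ITD.

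The main obstacle, and the reason Zhu's result is non-trivial, is precisely this finite list of boundary orders: for each pair $(q,\ell)$ with $q$ only slightly above $3\ell$ and $q$ having bad arithmetic (for instance $q\equiv 2\pmod 4$ or $q$ a multiple of $6$ with few structural options), one must either exhibit an $\mathrm{ITD}(4,q;\ell)$ by hand or combine several intermediate group-divisible designs. A full proof therefore unavoidably contains a case analysis over these sporadic orders, whereas the generic regime $q\gg \ell$ follows cleanly from the weighting construction and Proposition \ref{progs11}.
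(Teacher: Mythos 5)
First, a point of comparison: the paper does not prove this statement at all --- it is quoted from Heinrich and Zhu \cite{Zhu} and used as a black box, so there is no internal proof to measure yours against. Your outline does follow the route taken in that literature: reformulate the problem as the existence of an incomplete transversal design $\mathrm{ITD}(4,q;\ell)$, fill the hole with a $\mathrm{TD}(4,\ell)$ (which is exactly where the exclusion $\ell\notin\{2,6\}$ comes from), obtain the generic range $q\gg\ell$ by Wilson-type weighting from Proposition~\ref{progs11}, and then deal with the remaining orders separately. The necessary condition $q\ge 3\ell$ is also argued correctly and matches the subcode-order bound proved earlier in the paper.

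As a proof, however, there is a genuine gap, and you name it yourself without closing it: essentially the entire content of the Heinrich--Zhu theorem lives in the range where $q$ lies between $3\ell$ and a constant multiple of $\ell$, and your text does not handle it. The one concrete tool you offer for that range --- taking the hole $B$ to be an additive subgroup of $\mathrm{GF}(q)$ stable under the multipliers $\alpha$ --- only yields pairs $(q,\ell)$ with $\ell$ a power of the characteristic and $\ell$ dividing $q$ (in the natural subfield case $q\ge\ell^2$), so it produces nothing near $q=3\ell$ for general $\ell$ and nothing when $q$ is not a prime power. The orders you defer are therefore not a finite sporadic list to be checked by hand; they are the bulk of the theorem and require the intricate direct, singular-product and GDD/PBD constructions that occupy most of \cite{Zhu}. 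For the purposes of this paper the correct move is the one the author makes: cite the result rather than reprove it; if you do want a self-contained argument, the boundary range must actually be constructed, not postponed.
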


\begin{proposition}\label{proNM22}{\rm \cite{Dukes}}
For $t\geq 3$ and all sufficiently large $q,\ell$, $q\geq
8(t+1)^2\ell$, there exists a set of $t$ MOLS of order $q$ with
mutually orthogonal latin subsquares of order $\ell$.
\end{proposition}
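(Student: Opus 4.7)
The plan is to pass to the equivalent language of incomplete transversal designs and then apply a Wilson-type filling construction. A set of $t$ mutually orthogonal latin squares of order $q$ with mutually orthogonal sub-MOLS of order $\ell$ is the same combinatorial object as an incomplete transversal design $\mathrm{ITD}(t+2,q;\ell)$: a design on $t+2$ groups of $q$ points each with a distinguished ``hole'' of $\ell$ points in each group, in which every transversal pair from distinct groups is covered by a unique block, except pairs lying entirely inside the hole, which are uncovered. So the task reduces to constructing an $\mathrm{ITD}(t+2,q;\ell)$ whenever $q\geq 8(t+1)^2\ell$ and $q,\ell$ are sufficiently large.

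The main tool is Wilson's fundamental construction, which, given a $\mathrm{TD}(t+3,n)$ and a $\mathrm{TD}(t+2,m)$, produces an $\mathrm{ITD}(t+2,mn+a;a)$ for each $0\leq a\leq n$ by truncating the last group of the large design to a subset of size $a$, inflating each point of the remaining groups by a factor of $m$, and replacing each inflated block by a copy of the small design. I would fix once and for all a prime $m$ with $m\leq 8(t+1)^2-1$ and $m$ as large as possible in this range (such $m$ exists and dominates $t+1$ by Bertrand's postulate), so that a $\mathrm{TD}(t+2,m)$ is available from the finite field $GF(m)$; by Proposition \ref{progs11} applied with $t+1$ in place of $t$, the auxiliary $\mathrm{TD}(t+3,n)$ also exists once $n$ is sufficiently large. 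Setting $a=\ell$ and $n=(q-\ell)/m$ then delivers the desired $\mathrm{ITD}$, and the hypothesis $q\geq 8(t+1)^2\ell$ is calibrated precisely so that $n\geq a=\ell$, the range in which the filling construction is valid.

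The main obstacle is the divisibility constraint $m\mid q-\ell$, which fails for most $q$. I would handle it by letting the hole size vary slightly: write $q-\ell=mn+r$ with $0\leq r<m$, apply the Wilson construction with $a=\ell+r$ to obtain an $\mathrm{ITD}(t+2,q;\ell+r)$, and then shrink the enlarged hole back to the prescribed size $\ell$ by carving out of it a smaller $\mathrm{ITD}(t+2,\ell+r;\ell)$. The existence of this smaller ITD follows either by induction on $\ell$ or by a second application of the fundamental construction combined with a PBD-closure argument; since $r<m$ is bounded in terms of $t$ alone while $\ell$ is arbitrary, the recursion terminates after a bounded number of steps. This bookkeeping --- converting the discrete family $\{mn+a\}$ realized by a single master design into the full continuum $q\geq 8(t+1)^2\ell$ --- is the technical heart of the argument, and it is precisely where the constant $8(t+1)^2$ is calibrated to provide both enough room for $n$ in the master and enough slack to absorb the remainder $r$.
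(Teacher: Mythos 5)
The paper offers no proof of this proposition: it is quoted directly from Dukes and van Bommel \cite{Dukes}, so there is no internal argument to compare against, and your sketch must be judged on its own. Its general flavour (Wilson-style inflation of a truncated transversal design, then filling) is indeed the right family of techniques, but the step you use to absorb the remainder $r$ is not ``bookkeeping'' --- it is impossible as stated. You propose to shrink a hole of size $\ell+r$ down to size $\ell$ by inserting an $\mathrm{ITD}(t+2,\ell+r;\ell)$. However, an incomplete transversal design on $k$ groups of order $v$ with a hole of size $h$ exists only if $v\geq (k-1)h$: no block may contain two hole points (such a block would cover a pair that must remain uncovered), each hole point lies in exactly $v-h$ blocks, so the $kh(v-h)$ blocks meeting the hole are distinct and at most the $v^2-h^2=(v-h)(v+h)$ blocks in total, giving $kh\leq v+h$. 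With $k=t+2$, $v=\ell+r$, $h=\ell$ this forces $r\geq t\ell$, whereas your $r$ is bounded by $m=O(t^2)$ while $\ell\rightarrow\infty$. (This is the same obstruction as the paper's own bound $\varrho\leq\ell\leq |Q|/\varrho$ on subcode orders.) So the enlarged hole cannot be carved back down, the single master design only reaches $q$ in one residue class modulo $m$, and the recursion you describe does not terminate in the full range $q\geq 8(t+1)^2\ell$.

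There are secondary gaps as well. The fundamental construction in the form you quote needs, besides a $\mathrm{TD}(t+2,m)$, an $\mathrm{ITD}(t+2,m+1;1)$ to fill the blocks that meet the truncated group; since $m+1$ need not be a prime power, its existence must be argued, and this constrains the admissible $m$ beyond Bertrand's postulate. An $\mathrm{ITD}(t+2,q;\ell)$ is also not ``the same combinatorial object'' as $t$ MOLS with aligned mutually orthogonal subsquares: one must still fill the hole with a $\mathrm{TD}(t+2,\ell)$, which is available for large $\ell$ by Proposition \ref{progs11} but should be said. Finally, with $m$ chosen as large as $8(t+1)^2-1$, the requirement $a=\ell+r\leq n=(q-a)/m$ already fails at the boundary $q=8(t+1)^2\ell$; one must take $m$ noticeably smaller. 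The real work in \cite{Dukes} lies precisely where your sketch breaks, which is presumably why the present paper cites the result rather than reproving it.
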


\begin{corol}
 The logarithm of the  number of sets of $t$ MOLS of order $q$  is $\Theta(q^2\ln q)$
 as
$q\rightarrow \infty$ and $t\geq 2$ is fixed.
\end{corol}

The lower bound follows from Theorem \ref{thsqs1}  and Proposition
\ref{proNM2},\ref{proNM22}, the upper bound is trivial.

\section{Designs}

A {\it $t$-wise balanced design} $t$-BD is a pair $(X,B)$ where $X$
is a finite set of points and $B$ is a set of subsets of $X$, called
blocks, with property that every $t$-element subset of $X$ is
contained in a unique block.
 A {\it $3$-wise bipartite
balanced design} $3$-BBD($n$) (see \cite{Hartman}) is a triple
$(X,{g_1,g_2},B)$ where ${g_1,g_2}$ ($|g_1|=|g_2|$) is a partition
of $X$, $|X|=n$, $B$ is a set of $4$-element blocks such that
$|b\cap g_i|=2$ for every $b\in B, i=1,2$ with property that every
$3$-element subset $s$ ($s\cap g_i\neq \varnothing $, $i=1,2$) is
contained in a unique block.

Obviously,  Steiner system\footnote{\,We will use notation
$S(t,k,v)$, $3$-BBD($n$), etc. for sets of correspondent designs.}
$S(t,k,v)$ is a $t$-BD such that $|X|=v$ and $|b|=k$ for every $b\in
B$. Besides, Steiner quadruple system ($t=3$, $k=4$), we consider
also a $3$-BD denoted by $S(3,\{4,6\},v)$ consisting of blocks of
size $4$ or $6$.

Let $X$ be a set of points, and let $G = \{G_1,\dots,G_d\}$ be a
partition of $X$ into $d$ sets of cardinality $q$. A {\it
transverse} of $G$ is a subset of $X$ meeting each set $G_i$ in at
most
 one point. A set of $w$-element transverses of $G$ is an $H(d,
q, w, t)$ {\it design} (briefly, H-design, see \cite{HCD},VI.63) if
each $t$-element transverse of $G$ lies in exactly one transverse of
the H-design.

An MDS code $M\subset Q^d$ with code distance $t+1$ is equivalent to
$ H(d, q, d, d-t)$, where $G=\{Q_1,\dots, Q_d\}$, $Q_i$ are the
copies of $Q$, and the block $\{x_1,\dots,x_d\}$ lies in the
H-design whenever  $(x_1,\dots,x_d)\in M$.  If $t=2$ and $w=d$ an
H-design is called a {\it transversal design}. Transversal designs
are equivalent to systems of MOLS.

If $q$ is even then a $3$-BBD $(X,{g_1,g_2},B)$ is equivalent to the
MDS code $M\subset Q^4$ (with the code distance $2$)  that satisfies
the conditions
\begin{equation}\label{egs1}
 (x,y,u,v)\in M \Rightarrow (y,x,u,v), (x,y,v,u),
(y,x,v,u)\in M;\ \ \forall x,u \in Q\  (x,x,u,u)\in M.
\end{equation}
Here $g_1=Q_1\cup Q_2$, $g_2=Q_3\cup Q_4$, $Q_i$ are copies of $Q$,
and $\{x_1,x_2,x_3,x_4\}\in B$ if $(x_1,\dots,x_4)\in M$ and
$x_1\neq x_2$.


Using methods of \cite{Cameron}, \cite{KPS} and  Corollary
\ref{thgs1} we can prove the following theorem.

\begin{theorem}\label{thgs2}
The logarithm of the  number of $3$-wise bipartite balanced designs
on $n$-element set is $\Theta(n^3\ln n)$ as $n\rightarrow\infty$ and
$n$ is even.
\end{theorem}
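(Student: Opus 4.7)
The upper bound $\ln|\{3\text{-BBD}(n)\}| = O(n^3 \ln n)$ is immediate, since each such design consists of $O(n^3)$ blocks drawn from at most $O(n^4)$ possible $4$-subsets. For the lower bound I plan to use the equivalence recalled just above the theorem: with $n = 2q$ and $|Q| = q$, a $3$-BBD corresponds bijectively to an MDS code $M \subset Q^4$ of code distance $2$ satisfying the symmetry and idempotency conditions (\ref{egs1}). The strategy mirrors the switching argument of Theorem \ref{thsqs1}, but the switchings must now be carried out in symmetric quadruples so that (\ref{egs1}) survives.

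The main step, and what I expect to be the main obstacle, is the construction of a seed MDS code $M_0 \subset Q^4$ which (i) satisfies (\ref{egs1}) and (ii) contains a sub-MDS code $C_0$ of order $\ell = \Theta(q)$ supported on $A_1 \times A_2 \times A_3 \times A_4$ for four pairwise disjoint subsets $A_i \subset Q$ with $|A_i| = \ell$. I would build $M_0$ by combining Cameron's construction (Proposition \ref{progs12}), which places a subsquare off the diagonal of a symmetric unipotent latin square, with the recursive latin-cube-with-subcube construction from \cite{KPS} (Proposition \ref{proNM1}); Cameron's trick has to be lifted one dimension up and then patched with an ambient MDS structure so that the two-fold coordinate symmetry and the idempotency $(x,x,u,u)\in M_0$ both hold on the complement of the four subcodes. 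Once $M_0$ is produced, the symmetries in (\ref{egs1}) force $M_0$ to contain three further copies of $C_0$ at $A_2 \times A_1 \times A_3 \times A_4$, $A_1 \times A_2 \times A_4 \times A_3$ and $A_2 \times A_1 \times A_4 \times A_3$; since the $A_i$ are disjoint, all four subcodes are mutually disjoint from each other and from the ``diagonal'' $\{(x,x,u,u)\}$, so switching them will not disturb the idempotency of $M_0$.

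Given $M_0$, the counting becomes essentially automatic. For any latin $3$-cube $L$ of order $\ell$ --- equivalently, any MDS code of order $\ell$, length $4$ and distance $2$ --- I would simultaneously replace $C_0$ by a relabelled copy of $L$ on $A_1 \times A_2 \times A_3 \times A_4$ and the three symmetric subcodes by the corresponding coordinate-swapped copies of $L$. Proposition \ref{proNMa} then guarantees that the resulting code $M(L)$ is again an MDS code with the same parameters, and by construction $M(L)$ still satisfies (\ref{egs1}); hence $M(L)$ corresponds to a valid $3$-BBD on $n$ points, and distinct $L$ yield distinct designs. Finally, Corollary \ref{thgs1} supplies $\exp(\Theta(\ell^3 \ln \ell)) = \exp(\Theta(n^3 \ln n))$ choices of $L$, matching the upper bound.
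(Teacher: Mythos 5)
Your overall architecture --- a symmetric seed MDS code satisfying (\ref{egs1}), four subcodes forming one orbit under the group $\Upsilon$ generated by $(12)$ and $(34)$, simultaneous switching by an arbitrary latin $3$-cube $L$ of order $\Theta(n)$ and its coordinate-permuted copies, and counting via Corollary \ref{thgs1} --- is exactly the paper's. But the step you yourself flag as ``the main obstacle'', namely the construction of the seed code $M_0$, is precisely the nontrivial content of the proof, and your sketch of it (``Cameron's trick has to be lifted one dimension up and then patched with an ambient MDS structure'') is a plan, not a construction. As written this is a genuine gap: without an explicit $M_0$ that simultaneously satisfies (\ref{egs1}) and carries a $\Upsilon$-symmetric family of disjoint subcodes of order $\Theta(n)$ off the diagonal, nothing can be switched and no lower bound follows.

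The paper closes this gap with a one-line device you are missing: take the symmetric unipotent latin square $f$ of order $n$ from Proposition \ref{progs12}, with subsquares of order $\ell\leq n/4$ supported on $K_0\times K_1\times K_1$ and $K_1\times K_0\times K_1$, and set $M=\{(x,y,u,v)\ |\ f(x,y)=f(u,v)\}$. This is an MDS code of length $4$ and distance $2$; symmetry of $f$ yields the coordinate symmetries in (\ref{egs1}), unipotency yields $(x,x,u,u)\in M$, and the two subsquares produce four pairwise disjoint subcodes $B_\sigma$ on $K_{\sigma_1}\times K_{\sigma_2}\times K_{\sigma_3}\times K_{\sigma_4}$ for $\sigma\in\{0101,1001,0110,1010\}$, which form exactly one $\Upsilon$-orbit and avoid the diagonal because $K_0\cap K_1=\varnothing$. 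In particular no ``lifting one dimension up'' and no recursive patching from \cite{KPS} is needed, and your requirement of four pairwise disjoint alphabets $A_1,\dots,A_4$ is stronger than necessary: two disjoint sets $K_0,K_1$ suffice. Once this seed is in place, the remainder of your argument (replacing $B_{\pi\sigma}$ by $C_\pi$, Proposition \ref{proNMa}, injectivity in $L$, and the count $\exp(\Theta(\ell^3\ln \ell))$ with $\ell=\Theta(n)$) coincides with the paper's and is correct.
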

\begin{proof} Suppose the quasigroup $f$ of order $n$ satisfies the
hypothesis of Proposition \ref{progs12} and $\ell\leq n/4$. Consider
the MDS code $M=\{ (x,y,u,v) \ |\ f(x,y)=f(u,v)\}$. It is easy to
see that $M$ meets the conditions (\ref{egs1}). Furthermore, $M$ has
subcodes $B_\sigma$ on $K_{\sigma_1}\times K_{\sigma_2}\times
K_{\sigma_3}\times K_{\sigma_4}$, where $\sigma=0101, 1001, 0110$ or
$1010$ (see Proposition \ref{progs12}).

 For any MDS code $C$  of order $\ell$
 and permutation $\pi$ we define $C_\pi=
\{(x_{\pi1},\dots,x_{\pi 4}) \ |\ x\in C\}$. Let $\Upsilon$ be a
group of permutations on 4 elements generated by transpositions
$(12)$ and $(34)$.

For any word $\sigma=(\sigma_1,\dots,\sigma_4)$ define $\pi\sigma$
as $(\sigma_{\pi1},\dots,\sigma_{\pi4})$. For each $\pi \in
\Upsilon$ we can exchange  $B_{\pi\sigma}$ by $C_\pi$ in $M$. By
Proposition \ref{proNMa} the obtained set $M'$ is an MDS code.

By construction, $M'$ satisfies (\ref{egs1}). Since we use an
arbitrary code $C$, the number of $3$-wise bipartite balanced design
is greater than the number of MDS codes of order $k$. From  Theorem
\ref{thsqs1} we obtain the lower bound of the number of designs. The
upper bound is obvious.
\end{proof}

The following doubling construction of block designs  is well known
(see \cite{Hartman}).

\begin{proposition}\label{progs3}

\noindent 1. If $S_n\in S(3,4,n)$, $B_n\in \mbox{\rm 3-BBD}(n)$ then
there exists $S_{2n}\in S(3,4,2n)$ such that $S_n,B_n\subset
S_{2n}$.

\noindent 2. If $S_n\in S(3,\{4,6\},n)$, $B_n\in \mbox{\rm
3-BBD}(n)$ then there exists $S_{2n}\in S(3,\{4,6\},2n)$ such that
$S_n,B_n\subset S_{2n}$.

\end{proposition}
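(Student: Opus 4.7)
The plan is the classical doubling construction for Steiner quadruple systems. First I would set up the ground set $V = V_0 \sqcup V_1$ of $S_{2n}$ with $|V_0| = |V_1| = n$, fix a bijection identifying each of $V_0$ and $V_1$ with the underlying set of $S_n$, and fix an identification of the bipartition $\{V_0, V_1\}$ with the two groups $\{g_1, g_2\}$ of $B_n$. I would then define $S_{2n}$ to be the union of three families of blocks: the transferred copy of $S_n$ on $V_0$; the mirrored copy of $S_n$ on $V_1$; and the blocks of $B_n$ embedded in $V$ via the bipartition. Any block lies in exactly one family because its ``type'' $|B \cap V_0| \in \{0, 2, 4\}$ distinguishes the families, so the three families are automatically pairwise disjoint.

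Next I would verify the Steiner $S(3,4,2n)$ property by a simple case split on a triple $T \subset V$ according to $|T \cap V_0|$. For $|T \cap V_0| \in \{0, 3\}$, the set $T$ lies wholly in $V_1$ or $V_0$ and is covered by a unique block of the (mirrored or transferred) copy of $S_n$, with no interference from the $3$-BBD blocks since those are transverses. For $|T \cap V_0| \in \{1, 2\}$, the set $T$ is itself a $3$-element transverse of $\{V_0, V_1\}$ and is covered by a unique block of $B_n$ by its defining property, with no interference from the intra-part $S_n$ copies since those lie entirely in one half. Hence every triple is covered exactly once, and $S_n, B_n \subset S_{2n}$ holds by construction.

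For part~2 the same recipe works verbatim: the intra-part families are now blocks of size $4$ or $6$ drawn from $S_n \in S(3,\{4,6\},n)$ (still lying entirely in one half), while the $B_n$ family is still $4$-element transverses. The case analysis is unchanged because the only property used is that $S_n$ covers each intra-part triple exactly once, which still holds; the output is $S_{2n} \in S(3,\{4,6\},2n)$, and again $S_n, B_n \subset S_{2n}$ by construction.

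There is essentially no substantive obstacle: the proof is a straightforward case split whose correctness is immediate from the definitions of $S(3,4,n)$, $S(3,\{4,6\},n)$, and $3$-BBD. The only care required is in the initial alignment of the bipartite structure of $B_n$ with $\{V_0, V_1\}$, so that the $3$-BBD blocks serve precisely as the cross-blocks between the two halves of $V$; once that is arranged, the construction and verification both reduce to bookkeeping.
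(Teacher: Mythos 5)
Your construction is the standard doubling construction, which the paper itself does not prove but cites as well known from Hartman, and it is correct: the two intra-part copies of $S_n$ handle the triples with $|T\cap V_0|\in\{0,3\}$, and the blocks of $B_n$ (each meeting both halves in exactly two points, hence never containing an intra-part triple) handle the triples meeting both halves. The only blemish is terminological: a triple with $|T\cap V_0|\in\{1,2\}$ and a $2{+}2$ block of $B_n$ are not \emph{transverses} of $\{V_0,V_1\}$ in the paper's sense (a transverse meets each group in at most one point); the property you actually need, and which the $3$-BBD definition supplies directly, is that every $3$-set meeting both groups lies in a unique block.
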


\begin{proposition}\label{progs4}{\rm(\cite{Hanani63},  \cite{Hartman} Th.
4.1)} There is an injection from $S(3,\{4,6\},n)$ to
$S(3,\{4,6\},2n-2)$.
\end{proposition}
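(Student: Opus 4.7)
The plan is to recall the classical ``$v \to 2v-2$'' doubling construction due to Hanani, give an explicit formula for the image design, and then observe that the construction is patently invertible on its image, which gives the injection.

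Start with any $S_n \in S(3,\{4,6\},n)$ on a point set $V$ with $|V| = n$. Fix two distinguished points $\alpha,\beta \in V$, set $V^\ast = V \setminus \{\alpha,\beta\}$, and let $V'$ be a disjoint copy of $V^\ast$ (with bijection $x \mapsto x'$), so that the new ground set $W = V \cup V'$ has cardinality $2n-2$. The design $\Phi(S_n)$ on $W$ will consist of two kinds of blocks: first, every block of $S_n$, taken as a block of $\Phi(S_n)$ on $V \subset W$; second, a family $\mathcal{F}$ of ``mixed'' blocks, each of which meets $V'$ nontrivially, chosen (independently of $S_n$) so as to cover exactly once every 3-subset of $W$ that is not already a 3-subset of $V$.

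For the construction of $\mathcal{F}$, I would follow Hanani \cite{Hanani63}/Hartman \cite{Hartman}: since $n$ is admissible we have $|V^\ast| = n-2$ even, so one can fix a 1-factorization of the complete graph on $V^\ast$ (equivalently, a resolvable pairing) and use its parallel classes, together with the pair $\{\alpha,\beta\}$ and pairs from $V'$, to build quadruples and (where necessary) sextuples that hit each 3-subset of $W$ with at least one primed point exactly once. The casework splits by how the 3-subset meets $\{\alpha,\beta\}$, $V^\ast$, and $V'$; because the auxiliary 1-factorization does not depend on $S_n$ at all, the family $\mathcal{F}$ is a fixed function of $n$ (and of the chosen $\alpha,\beta$ and 1-factorization), not of $S_n$.

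The main obstacle is verifying that such a choice of $\mathcal{F}$ exists and indeed covers all ``new'' triples exactly once without creating any block of size other than $4$ or $6$; this is precisely the content of Hanani's original argument and of \cite[Th.~4.1]{Hartman}, so I would import it rather than redo it. Once $\mathcal{F}$ is fixed, the blocks of $\Phi(S_n)$ that are contained in $V$ are exactly the blocks of $S_n$ (all blocks of $\mathcal{F}$ intersect $V'$ by construction), so the map $S_n \mapsto \Phi(S_n)$ is recovered by restriction to $V$ and is therefore injective. This yields the desired injection $S(3,\{4,6\},n) \hookrightarrow S(3,\{4,6\},2n-2)$.
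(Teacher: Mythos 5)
The paper offers no proof of this proposition---it is imported verbatim from Hanani and Hartman---so the only thing to check is whether your sketch is a faithful account of a working construction. It is not: the design $\Phi(S_n)$ you describe cannot exist. You require that every block of $S_n$ survive unchanged as a block contained in the $n$-subset $V$ of the new $(2n-2)$-point set $W$, with a fixed family $\mathcal{F}$ of blocks meeting $V'$ covering all remaining triples. That would make $V$ a proper subdesign (a flat) of order $n$ inside a $3$-design of order $2n-2$, which is ruled out by the standard subsystem bound. Concretely: fix $x\in W\setminus V$. For each pair $\{a,b\}\subseteq V$, the unique block $\beta_{ab}$ through $\{a,b,x\}$ must satisfy $\beta_{ab}\cap V=\{a,b\}$ (a block not contained in $V$ cannot meet $V$ in three points, since that triple is already covered inside $V$), so $\beta_{ab}$ contains at least $|\beta_{ab}|-3\ge 1$ points of $Y=(W\setminus V)\setminus\{x\}$, and distinct pairs yield distinct blocks. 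On the other hand, for fixed $y\in Y$ the blocks through $\{x,y\}$ meet $V$ in pairwise disjoint pairs, so at most $n/2$ of the $\beta_{ab}$ contain $y$. Counting block--point incidences with $Y$ gives $\binom{n}{2}\le |Y|\cdot\frac{n}{2}=\frac{(n-3)n}{2}$, i.e.\ $n-1\le n-3$, a contradiction. (For pure $SQS$ this is the familiar fact that a proper subsystem of order $v$ forces ambient order at least $2v$; the same obstruction persists with block sizes $4$ and $6$.)

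Consequently you cannot ``import'' the existence of $\mathcal{F}$ from \cite{Hanani63} or \cite{Hartman}, because their $2v-2$ construction is structurally different: it deletes a point and doubles the remaining $v-1$ points, replacing each old block (both those through the deleted point and those avoiding it) by a gadget of new blocks on the doubled set. No copy of the original design sits inside the image as a collection of blocks, so your injectivity argument---``restrict to $V$ and read off $S_n$''---has nothing to restrict to. To prove the proposition one must instead argue that the original design is recoverable from the output of the actual construction (e.g.\ by fixing all auxiliary data, such as the $1$-factorization, once and for all and showing the transformed blocks determine the originals); that step is genuinely absent from your proposal.
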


\section{Lower bound for the number of SQSs}

The following theorem provides a new construction of SQSs based on
MDS codes. Existence of  suitable MDS codes follows from
Propositions \ref{progs1} -- \ref{progs11}.

\begin{theorem}\label{thgs3}

1. If $S_{2n+2}\in S(3,4,2n+2)$, $B_n\in 3{\rm -BBD}(n)$, $n>75$ is
even, then there exists $S_{8n+2}\in S(3,4,8n+2)$ such that
$S_{2n+2},B_n\subset S_{8n+2}$.

2. If $S_{2n+2}\in S(3,\{4,6\},2n+2)$, $B_n\in 3{\rm -BBD}(n)$,
$n>75$ is even, then there exists $S_{8n+2}\in S(3,\{4,6\},8n+2)$
such that $S_{2n+2},B_n\subset S_{8n+2}$.

\end{theorem}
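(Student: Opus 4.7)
The plan is to realise the target SQS on a point set $X$ of size $8n+2$ by partitioning it as $X=Y\sqcup A_1\sqcup A_2\sqcup A_3$ with $|Y|=2n+2$, $|A_1|=|A_2|=|A_3|=2n$, placing the given $S_{2n+2}$ on $Y$ as a subdesign, and then supplying the remaining blocks from several auxiliary MDS-code-based designs so that each $3$-subset of $X$ ends up in exactly one block. The triples inside $Y$ are already covered by $S_{2n+2}$; the rest split naturally by how they meet the four ``groups'' $Y,A_1,A_2,A_3$, and each type of triple is covered by its own family of $4$-blocks (or, in Part~2, $6$-blocks).

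For transversal triples --- those meeting three distinct groups in one point each --- I would use a latin $3$-cube of order $2n$, i.e., an MDS code of length~$4$ and distance~$2$ on an alphabet of size $2n$. For bipartite triples (two in one group, one in another) I would use a $3$-BBD on each relevant pair of groups. The crucial trick is to embed the prescribed $B_n$ as a subdesign of one such larger $3$-BBD: Proposition~\ref{progs12} furnishes a symmetric unipotent latin square of suitable order containing a subsquare of the right size, which via~(\ref{egs1}) produces a $3$-BBD on (say) $4n$ points containing a sub-$3$-BBD of order $n$; Proposition~\ref{proNMa} then permits the switching of $B_n$ into that slot without disturbing the covering property. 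Existence of all the MDS codes invoked along the way is guaranteed for $n>75$ by Proposition~\ref{progs11} (since $k(6)\le 74$) combined with the projection and embedding results Propositions~\ref{progs1}--\ref{progs2}, which let us descend from a suitable high-distance MDS code to the precise latin cubes, MOLS and $3$-BBDs we need at each step.

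Triples involving the two ``extra'' points of $Y$ (the $+2$ in $2n+2$), as well as triples that sit entirely inside a single $A_i$, will have to be patched by a small additional construction exploiting the same MDS machinery; this is where I expect Proposition~\ref{progs2} (distance-$4$ MDS codes on $Q^5$ refining to distance-$2$ codes on $Q^4$) to do the real work. The hard part --- and the one where a careful proof will have to spend most of its pages --- is the bookkeeping: showing that the families of blocks produced by $S_{2n+2}$, the latin $3$-cube, the several pairwise $3$-BBDs (one of which now contains $B_n$), and the patch fit together with every $3$-subset of $X$ covered exactly once, and the already-chosen blocks of $S_{2n+2}$ and $B_n$ surviving intact. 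Part~2 should then be an immediate variant: wherever a pure $4$-block cover would fail or be awkward, a $6$-block is substituted, which is legal in the $S(3,\{4,6\},\cdot)$ setting and simplifies the bookkeeping.
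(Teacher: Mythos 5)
Your decomposition $X=Y\sqcup A_1\sqcup A_2\sqcup A_3$ with $|Y|=2n+2$ and $|A_i|=2n$ does not survive contact with the two classes of triples you defer to a ``small additional construction'', and those are precisely where the theorem is hard. First, the unequal group sizes break the machinery you invoke: an H-design (latin $3$-cube) needs equal groups, and a $3$-BBD between $Y$ and $A_i$ requires $|Y|=|A_i|$, so neither the transversal triples meeting $Y$ nor the triples with two points in $Y$ and one in some $A_i$ (or vice versa) are covered by your gadgets. Second, triples lying entirely inside a single $A_i$ have no home: an $SQS(2n)$ need not exist (for $n$ even one can have $2n\equiv 0\pmod 6$), and every block reaching outside $A_i$ is already assigned to another triple type. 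Even after the natural repair --- split $Y$ into $Y'\cup\{e_1,e_2\}$ with $|Y'|=2n$, put an $SQS(2n+2)$ on each $A_i\cup\{e_1,e_2\}$, and work with four equal groups of size $2n$ --- you are left with the triples $\{e_\delta,x,y\}$ where $x,y$ lie in two different groups; the natural way to cover these is to place a small SQS on each transversal block of an $H(4,2n,4,2)$ design together with $\{e_1,e_2\}$, which would require an $SQS(6)$, and no $SQS(6)$ exists. So the four-group approach is a dead end, not a bookkeeping exercise.

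The paper's construction is built around escaping exactly this obstruction: it partitions $\Omega\setminus\{e_1,e_2\}$ into \emph{eight} groups of size $n$, indexed by the point set $I$ of an $SQS(8)$ sitting inside an $SQS(10)$ on $I\cup\{e_1,e_2\}$, and takes an $MDS(6,8,n)$ code $M$ on the eight groups (this is where $n>75$ and $k(6)\le 74$ from Proposition~\ref{progs11} enter). Since every pair of points in distinct groups lies in a unique $8$-block of $M^*$, placing a copy of $S_{10}$ on each such $8$-block together with $\{e_1,e_2\}$ covers every triple through exactly one of $e_1,e_2$ as well as the transversal triples inside codewords; Proposition~\ref{progs2} is then used not as a ``patch'' but to extend each $4$-coordinate projection $M_s$, $s\in S_8$, to a latin cube $C_s$ whose leftover blocks $C_s^*\setminus M_s^*$ cover the remaining transversal triples. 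The prescribed $S_{2n+2}$ enters as the SQS on $A_{(i,0)}\cup A_{(i,1)}\cup\{e_1,e_2\}$ (which also absorbs the within-group triples), and the prescribed $B_n$ is taken directly as one of the pairwise $3$-BBDs between groups with distinct $i$; no switching via Propositions~\ref{progs12} and~\ref{proNMa} is needed here --- that device belongs to Theorem~\ref{thgs2}. None of these ideas --- the $8+2$ partition, the $SQS(8)/SQS(10)$ templates, the high-distance MDS code --- appears in your sketch, so the gap is not one of bookkeeping but of the central construction.
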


\begin{proof} Below we describe a construction of $S_{8n+2}$ for item 1.
Item 2 is similar.

 Let $I=\{(i,\delta) \ |\ i\in\{0,1,2,3\}, \delta\in
\{0,1\}\}$.
 Denote by $S_8$ a SQS on $I$. Let $S_{10}$ be a SQS on
 $I\cup\{e_1,e_2\}$ such that $\{(i,0),(i,1),e_1,e_2\}\in S_{10}$
 for every $i\in\{0,1,2,3\}$.
Since $n>75$, there exists an $MDS(6,8,n)$ code $M$. We enumerate
these $8$ coordinates by elements of $I$. Consider
$s=\{s_1,s_2,s_3,s_4\}\in S_8$. Denote by $M_s$ the projection of
$M$ on the coordinates $s$. By Proposition~\ref{progs1} $M_s\in
MDS(2,4,n)$. By Proposition~\ref{progs2}, there exists $C_s\in
MDS(1,4,n)$ such that $M_s\subset C_s$.

Now we will construct SQS on a set $\Omega$ where $|\Omega|=8n+2$,
$\Omega=\{e_1,e_2\}\bigcup\limits_{(i,\delta)\in I} A_{(i,\delta)}$
and $|A_{(i,\delta)}|=n$.

Consider H-designs $M^*$, $M_s^*$ and $C_s^*$ with groups
$A_{(i,\delta)}$ that correspond to MDS codes $M$, $M_s$ and $C_s$.
Let us determine quadruples  of four types.

(a) Denote $R_1=\bigcup\limits_{s\in S_8}(C^*_s\setminus M^*_s)$. It
is clear that the blocks of $\bigcup\limits_{s\in S_8}C^*_s$ cover
only once all 3-subsets of $\Omega\setminus \{e_1,e_2\}$ where three
elements lie in different groups.  Besides, a 3-subset  is covered
by a  block of $\bigcup\limits_{s\in S_8} M^*_s$ if and only if it
is included in a  8-element subset from $M^*$. Note that
$\bigcup\limits_{s\in S_8}(C^*_s)$ and $\bigcup\limits_{s\in
S_8}(M^*_s)$ are  H-designs of type $H(8,n,4,3)$ and $H(8,n,4,2)$,
respectively, on $\Omega\setminus \{e_1,e_2\}$.

(b) Consider any 8-subset $b=\{a^{i,\delta}\in A_{(i,\delta)} \ |
(i,\delta)\in I\}\in M^*$. For every $b\in M^*$ determine a set
$P_b$
 consisting of  blocks $\{a^{s_1},a^{s_2}, a^{s_3}, a^{s_4}\}$, where
$\{s_1, s_2, s_3, s_4\}\in S_{10}$ and blocks $\{a^{s_1},a^{s_2},
a^{s_3}, e_\delta\}$, where $\{s_1, s_2, s_3, \delta\}\in S_{10}$.
Denote by $R_2=\{P_b\ |\ b\in M^*\}$ the set of all these blocks. By
definition of $S_{10}$, the blocks of $R_2$ cover all 3-sets
containing $e_1$ or $e_2$ (but not both) and two elements from
$A_{(i,\delta)}$ and $A_{(i',\delta')}$ where $i\neq i'$. Moreover
the blocks of $R_1\cup R_2$ cover all 3-subsets of $\Omega\setminus
\{e_1,e_2\}$, where the three elements lie in different groups.


(c) For any pair $s_0=(i_0,\delta_0)$,  $s_1=(i_1,\delta_1)$ where
$i_0\neq i_1$ consider a $3$-BBD $B_{s_0,s_1}$ with groups $A_{s_0}$
and $A_{s_1}$. Denote $R_3=\bigcup B_{s_0,s_1}$. It is clear that a
3-subset is cover by a block of $R_3$   if and only if  two elements
of the 3-subset lie in $A_{(i,\delta)}$ and the third element lies
in $A_{(i',\delta')}$, where $i\neq i'$.

(d) For $i=0,1,2,3$ consider a Steiner quadruple systems $D_i$ on
the sets $A_{(i,0)}\cup A_{(i,1)}\cup \{e_1,e_2\}$. Define
$R_4=\bigcup D_i$.

By the construction, the blocks from $S_{8n+2}=R_1\cup R_2\cup R_3
\cup R_4$ cover any 3-subset of $\Omega$ only once. To prove
$S_{8n+2}\in S(3,4,8n+2)$, we calculate $|S_{8n+2}|$. It is well
known that SQS of order $m$ consists of $\frac{m(m-1)(m-2)}{4!}$
blocks. Therefore $|R_1|=|S_8|(n^3-n^2)=14(n^3-n^2)$, $R_2=
(|S_{10}|-4)n^2=26n^2$, $R_3=({8 \choose 2}-4)({n \choose
2}n/2)=6n^2(n-1)$, $R_4=4|S_{2n+2}|=(2n+2)(2n+1)n/3$. Then
$$|S_{8n+2}|=|R_1|+|R_2|+|R_3|+|R_4|= 20n^3+6n^2+(2n+2)(2n+1)n/3=$$
$$= 64n^3/3 + 8n^2 + 2n/3= (8n+2)(8n+1)8n/24.$$
\end{proof}

Note that it is possible to use  SQSs of order $6k+2$ and $6k+4$,
$k\geq 1$ instead of $S_8$ and $S_{10}$.

Now we obtain a lower estimate of the number of block designs as a
corollary of  Propositions \ref{progs3}(2), \ref{progs4}, Theorem
\ref{thgs3}(2) and  the asymptotic estimate from Theorem
\ref{thgs2}.

\begin{theorem}\label{thgs4}
The logarithm of the  cardinality of $S(3,\{4,6\},2n)$  is greater
than $c(n^3\ln n)$, where $c>0$ is a constant.
\end{theorem}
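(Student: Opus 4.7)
My plan is to convert the abundance of 3-wise bipartite balanced designs from Theorem \ref{thgs2} into an abundance of $S(3,\{4,6\},m)$ designs via the insertion theorem of the paper (Theorem \ref{thgs3}(2)), and then to transport this lower bound to all large even orders using the doubling and injection constructions of Propositions \ref{progs3}(2) and \ref{progs4}.

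First I would establish the bound along a thin arithmetic progression. Fix even $n > 75$ and any $S_{2n+2} \in S(3,\{4,6\},2n+2)$ (nonempty for large $n$ by classical Hartman--Hanani constructions). Theorem \ref{thgs3}(2) associates to each $B_n \in 3\text{-BBD}(n)$ an element $S_{8n+2} \in S(3,\{4,6\},8n+2)$ that contains $B_n$ as a sub-collection of blocks supported on a designated pair of groups of a fixed labeled bipartition of the base set. Given $S_{8n+2}$ together with the labeled bipartition (chosen in at most $\exp(O(n))$ ways), the BBD $B_n$ is recovered, so the map $B_n \mapsto S_{8n+2}$ has multiplicity at most $\exp(O(n))$, which is negligible against $|3\text{-BBD}(n)| = \exp(\Theta(n^3 \ln n))$. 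Combining with Theorem \ref{thgs2} yields
\[
\log|S(3,\{4,6\},8n+2)| = \Omega(n^3 \ln n).
\]

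Next I would extend this to arbitrary large even orders $m$. For $m = 2n$ with $n$ even, Proposition \ref{progs3}(2) applied with a single fixed $S_n \in S(3,\{4,6\},n)$ and varying $B_n \in 3\text{-BBD}(n)$ produces elements of $S(3,\{4,6\},2n)$ each containing $B_n$; the same $\exp(O(n))$ over-count argument shows the map has small multiplicity, giving $\log|S(3,\{4,6\},2n)| = \Omega(n^3 \ln n)$ whenever $S(3,\{4,6\},n) \neq \emptyset$. For $m = 2n$ with $n$ odd (so 3-BBD$(n)$ is vacuous), Proposition \ref{progs4} supplies $|S(3,\{4,6\},m)| \geq |S(3,\{4,6\},(m+2)/2)|$, which after at most a bounded number of iterations lands in an even-argument case where the doubling bound applies; each such step costs only a constant factor in the exponent. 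Nonemptiness of $S(3,\{4,6\},n)$ for every sufficiently large even $n$ is secured by Step 1 combined with Proposition \ref{progs4}.

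The main (and in fact minor) obstacle is the uniform residue-class bookkeeping needed to guarantee that every sufficiently large even order is reachable from a Step 1 seed by a bounded-length chain of doublings and injections. Because the count $|3\text{-BBD}(n)|$ dominates any $\exp(O(n))$ correction by an enormous margin, a bounded number of these transitions loses only a multiplicative constant in the final constant $c$, making the residue analysis routine.
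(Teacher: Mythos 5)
Your proposal follows essentially the same route as the paper's proof: the direct combination of Theorem~\ref{thgs2} with Theorem~\ref{thgs3}(2) for orders $\equiv 2 \pmod{16}$, the doubling of Proposition~\ref{progs3}(2) with the $3$-BBD count for orders $\equiv 0 \pmod 4$, and iterated application of Proposition~\ref{progs4} for the remaining residues. One caution: your blanket claim that a bounded number of halvings $m \mapsto (m+2)/2$ always lands on a multiple of $4$ fails exactly for $m \equiv 2 \pmod{16}$ (e.g.\ $m = 2^j+2$ needs about $j$ steps), but that is precisely the residue class your Step~1 covers directly, so the case analysis closes as in the paper, where only orders $\equiv 6, 10, 14 \pmod{16}$ are treated by one or two injections.
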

\begin{proof}    If $n$ is  even  then  the statement follows from
Propositions \ref{progs3}(2) and Theorem \ref{thgs2}.

If $n$  is  odd  then we will consider some cases.
 Let $2n=16k+6$. Since $16k+6=2(8k+4)-2$  the
statement follows from Proposition \ref{progs4} and the case of even
$n$. The cases $2n=16k+10=2(2(4k+4)-2)-2$ and $2n=16k+14=2(8k+8)-2$
are similar. If $2n=16k+2$ then we  use Theorems  \ref{thgs2}  and
\ref{thgs3}(2). \end{proof}

We need some known constructions of SQSs.

\begin{proposition}\label{progs5}{\rm(\cite{Hartman} Th. 4.2)}
 There is an injection from $S(3,\{4,6\},n)$ to  $S(3,4,3n-2)$.

\end{proposition}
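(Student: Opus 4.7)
The plan is to realize the injection by a \emph{tripling-type construction}: from each $D=(X,B)\in S(3,\{4,6\},n)$, I will build a specific $D'\in S(3,4,3n-2)$ whose ground set is three copies of $X$ glued along one distinguished point, with blocks assembled canonically from the blocks of $D$ plus some auxiliary small Steiner quadruple systems.

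First I would fix a point $\infty\in X$ and set $Y=\{\infty\}\sqcup\bigl((X\setminus\{\infty\})\times\{0,1,2\}\bigr)$, so $|Y|=3n-2$. The three ``levels'' $L_i=(X\setminus\{\infty\})\times\{i\}$ stratify $Y$. The blocks of $D'$ will be organized by the level pattern of their four points and by whether $\infty$ occurs: (i) quadruples inside a single $L_i\cup\{\infty\}$, (ii) quadruples spanning two levels, and (iii) quadruples meeting all three levels.

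Next I would specify the blocks. Each $b\in B$ with $|b|=4$ is lifted level-by-level to quadruples of type (i) in the natural way. Each $b\in B$ with $|b|=6$ is expanded by placing a fixed local $S(3,4,16)$ on the tripled block (taking $b\times\{0,1,2\}$ and identifying the copies of $\infty$ whenever $\infty\in b$); this is legitimate because $16=3\cdot 6-2$ is an admissible SQS order, and the local SQS supplies precisely the quadruples covering all 3-subsets of the tripled block. Mixed-level 3-subsets whose three points come from three different blocks of $D$ are taken care of by combining the trivial $S(2,3,3)$ structure on the index set $\{0,1,2\}$ with the $3$-coverage of $X$ by $B$.

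The main verification is that every 3-subset of $Y$ is covered by exactly one block of $D'$. I would run a case analysis on the level pattern $(i_1,i_2,i_3)$ of the three points and on whether one of them is $\infty$; each case is handled by exactly one of the block types above, and the global count balances against $\binom{3n-2}{3}/\binom{4}{3}$. Injectivity of $D\mapsto D'$ follows because the distinguished point and the level partition are structurally recoverable from $D'$ (the three levels form the classes of an equivalence definable from the blocks meeting $\infty$), after which the derived designs of $D'$ at each level reproduce $B$. The hardest part will be ensuring that the local $S(3,4,16)$'s attached to distinct 6-blocks do not interfere: the compatible choice works because, by the $3$-BD property, any two distinct blocks of $D$ share at most two points, so the tripled blocks share at most six, and quadruples from the different local systems avoid overlap precisely when this intersection pattern is carefully exploited — this bookkeeping for size-$6$ blocks is the key point of the construction.
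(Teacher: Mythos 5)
The paper offers no proof of this proposition --- it is quoted from Hartman (Th.\ 4.2) --- so your attempt can only be judged against the known $3v-2$ construction, and as written it has genuine gaps. The most serious one is that your block types cover only some of the triples of $Y$. Nothing in your scheme covers a triple of the form $\{(x,i),(x,j),(y,k)\}$ with $x\neq y$ (two points lying over the same base point), nor $\{(x,0),(x,1),(x,2)\}$, nor $\{(x,i),(x,j),\infty\}$: such triples do not project to a $3$-element subset of $X$ at all, so neither the lifted blocks, nor the local systems on tripled blocks, nor your clause about ``three different blocks of $D$'' applies to them. Handling precisely these triples is the real content of the $3v-2$ construction; it requires additional block families built from a one-factorization of the complete graph on $X\setminus\{\infty\}$ (equivalently, a symmetric idempotent quasigroup), which your sketch omits entirely.

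Second, the treatment of the blocks of $D$ is also broken. The count $16=3\cdot 6-2$ is valid only for a $6$-block containing $\infty$; a $6$-block $b$ with $\infty\notin b$ triples to $18$ points, and $S(3,4,18)$ does not exist since $18\equiv 0\pmod 6$. Moreover, pasting free-standing local SQSs onto tripled blocks cannot work even when the orders are admissible: two base blocks may share two points, so their tripled versions share six points, and the two local systems would then each cover the triples inside that intersection, producing double coverage --- a conflict you flag as ``the key point'' but do not resolve. Likewise, lifting a $4$-block ``level-by-level'' leaves every mixed-level triple inside $b\times\{0,1,2\}$ uncovered. What a block $b$ actually needs is a resolution of the cross-level triples of its tripled version by an H-design-type (group-divisible) structure that is compatible across overlapping blocks, not a self-contained SQS. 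The injectivity argument is the only part of your proposal that survives; the construction itself must be replaced by the genuine Hanani--Hartman tripling machinery.
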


\begin{proposition}\label{progs6}

\noindent 1. There is an injection from $S(3,4,n)$ to
$S(3,4,6n-10)$. {\rm(\cite{Hartman} Th. 4.11)}

\noindent 2. If $n\equiv 10\mod{ 12}$ then there exists an injection
from $S(3,4,n)$ to $S(3,4,3n-4)$. {\rm(\cite{Hanani} 3.4)}
\end{proposition}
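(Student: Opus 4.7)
\textbf{Proof proposal for Proposition \ref{progs6}.} Both parts follow the same ``blow-up'' template. Fix two distinguished points $\infty_1,\infty_2$ in the given $S_n \in S(3,4,n)$ on a set $V$, set $V' = V \setminus \{\infty_1,\infty_2\}$, and take $k$ disjoint copies $G_1,\dots,G_k$ of $V'$, where $k=6$ for part 1 and $k=3$ for part 2. The ground set of the target system is $\Omega = \{\infty_1,\infty_2\} \cup G_1 \cup \cdots \cup G_k$, of size $2 + k(n-2)$, which equals $6n-10$ when $k=6$ and $3n-4$ when $k=3$. The injection sends $S_n$ to the SQS on $\Omega$ whose blocks are a copy of the given $S_n$ placed on $\{\infty_1,\infty_2\} \cup G_1$ together with a fixed collection of auxiliary blocks chosen once and for all, independently of the input. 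Since $S_n$ is recovered as the subsystem of the output induced on $\{\infty_1,\infty_2\}\cup G_1$, injectivity is immediate.

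The auxiliary blocks must cover all $3$-subsets of $\Omega$ that are not already covered by the embedded $S_n$. A triple $t \subset \Omega$ falls into one of several types according to how it is distributed among the parts $\{\infty_1\},\{\infty_2\},G_1,\dots,G_k$: (a) entirely inside $\{\infty_1,\infty_2\}\cup G_i$ for some $i$; (b) one element in $\{\infty_1,\infty_2\}$ and the remaining two in distinct $G_i,G_j$; (c) all three in distinct $G_i,G_j,G_\ell$; (d) two in some $G_i$ and one in a different $G_j$. Type (a) for $i=1$ is handled by the given $S_n$. The remaining types are handled by standard ingredients: a fixed SQS of order $n$ on each $\{\infty_1,\infty_2\}\cup G_i$ for $i\ge 2$ (handling the rest of type (a)); $3$-BBDs on pairs $G_i\cup G_j$ handling type (d) and, after augmentation by the infinity points, type (b); and an H-design of type $H(k,n-2,4,3)$ on $G_1\sqcup\cdots\sqcup G_k$ handling type (c).

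For part 1, the six-group version of this scheme works without any congruence restriction beyond the admissibility of $n$, since the relevant $H(6,n-2,4,3)$ designs and $3$-BBDs on pairs of $(n-2)$-sets exist for every admissible $n$. For part 2 the same scheme is applied with $k=3$, and the congruence $n\equiv 10\pmod{12}$ is exactly what makes the auxiliary ingredients available: it forces $n-2\equiv 8\pmod{12}$, which is the admissibility condition for the required $H(3,n-2,4,3)$ and the associated $3$-BBDs. One then verifies the block-count identity $\binom{2+k(n-2)}{3}/\binom{4}{3}$ matches the total number of blocks produced, as a sanity check on the decomposition.

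The main obstacle is the existence of the auxiliary H-designs and $3$-BBDs in precisely the required parameters; the fine-grained arithmetic behind these existence results is what accounts for the $n\equiv 10\pmod{12}$ restriction in part 2 and is the content of the cited Hartman and Hanani constructions. Once these ingredients are in hand, verifying that the union of all block classes is an $S(3,4,2+k(n-2))$ reduces to the routine case analysis over triple types (a)--(d), and injectivity of the map $S_n\mapsto S_N$ comes for free since the auxiliary blocks do not depend on $S_n$ and the induced subsystem on $\{\infty_1,\infty_2\}\cup G_1$ reconstructs the input uniquely.
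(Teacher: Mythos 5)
You should first note that the paper does not prove Proposition~\ref{progs6} at all: it is imported verbatim from Hartman (\cite{Hartman}, Th.~4.11) and Hanani (\cite{Hanani}, 3.4), so your attempt has to be judged against those constructions. Your ground-set bookkeeping is right ($2+k(n-2)$ equals $6n-10$ for $k=6$ and $3n-4$ for $k=3$), and injectivity via embedding the input as the induced subsystem on $\{\infty_1,\infty_2\}\cup G_1$ is the standard, correct mechanism. But the covering scheme has a definitional impossibility at its core: in an $H(d,q,w,t)$ design the blocks are \emph{transverses}, meeting each group in at most one point, so with $k=3$ groups a block has at most $3$ points and an $H(3,n-2,4,3)$ simply does not exist. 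Hence in part 2 your type-(c) triples (one point in each $G_i$) cannot be handled by the claimed ingredient; in Hanani's actual construction they are covered by blocks of shape $(2,1,1)$, whose $3$-subsets also include type-(d) triples, so the coverage of types (c) and (d) must be interleaved and your ``independent fixed ingredients'' decomposition collapses (and your assertion that $n\equiv 10 \pmod{12}$ is ``exactly'' the admissibility condition for $H(3,n-2,4,3)$ is moot). The blanket existence claim for $H(6,n-2,4,3)$ in part 1 is also false: fixing a point $x$, there are $\binom{5}{2}(n-2)^2$ transverse triples through $x$ and each block through $x$ covers exactly $3$ of them, forcing $3\mid (n-2)$; this fails for every admissible $n\equiv 4\pmod 6$.

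A second, independent gap is type (b). A triple $\{\infty_\delta,x,y\}$ with $x\in G_i$, $y\in G_j$, $i\neq j$, must lie in a block $\{\infty_\delta,x,y,z\}$, and in your scheme every choice of $z$ double-covers something: if $z=\infty_{3-\delta}$ or $z\in G_i\cup G_j$, the block re-covers a triple inside $\{\infty_1,\infty_2\}\cup G_i$ (or $\cup G_j$) already covered by the copy SQS on that part; if $z$ lies in a third group, it re-covers a cross triple already covered by the H-design (part 1) or by the $(2,1,1)$-blocks (part 2). So ``3-BBDs \dots after augmentation by the infinity points'' does not define a legal block class, and no fixed auxiliary family completes your decomposition. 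Compare the paper's own Theorem~\ref{thgs3}, which is a construction of exactly this flavor done correctly: there the $H(8,n,4,3)$ is deliberately \emph{punctured} --- the class is $R_1=\bigcup_{s}(C^*_s\setminus M^*_s)$, with the sub-design $\bigcup_s M^*_s$ removed precisely so that the blocks $P_b$ through $e_1,e_2$ (class $R_2$) can cover the freed cross triples together with the type-(b) triples without double coverage. That puncturing/interleaving mechanism is what the cited Hanani and Hartman recursions supply and what your sketch is missing; the congruence in part 2 arises from the divisibility constraints of those genuinely interleaved block classes, not from an H-design that cannot exist.
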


The  asymptotic estimate of the number of SQSs is a corollary of
constructions of SQSs provided by Propositions \ref{progs3}(1),
\ref{progs5}, \ref{progs6}, Theorem \ref{thgs3}(1) and the
asymptotic estimates from Theorems \ref{thgs2}, \ref{thgs4}.

\begin{theorem}\label{thgs5}
The logarithm of the  cardinality of $S(3,4,n)$  is $\Theta(n^3\ln
n)$ as $n\rightarrow\infty$ and $n\equiv 2\mod{6}$ or $n\equiv
4\mod{6}$.
\end{theorem}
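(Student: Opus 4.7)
The upper bound is immediate: any $S(3,4,n)$ consists of exactly $n(n-1)(n-2)/24$ blocks chosen from the $\binom{n}{4}$ possible quadruples, so $|S(3,4,n)|\le\binom{\binom{n}{4}}{n(n-1)(n-2)/24}$ and $\log|S(3,4,n)|=O(n^3\ln n)$.

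For the lower bound I plan to split by $n\pmod{12}$, noting the admissibility condition $n\equiv 2,4\pmod 6$ covers the four residues $2,4,8,10\pmod{12}$, and to reduce each to Theorem~\ref{thgs2} (for $3$-BBDs) or Theorem~\ref{thgs4} (for $S(3,\{4,6\})$) through the injection constructions already assembled. For $n\equiv 4$ or $8\pmod{12}$ the integer $n/2$ is even and itself SQS-admissible; fixing any $S\in S(3,4,n/2)$ and letting the $3$-BBD$(n/2)$ vary, Proposition~\ref{progs3}(1) sends each $3$-BBD$(n/2)$ to a distinct element of $S(3,4,n)$, and Theorem~\ref{thgs2} gives $\log|S(3,4,n)|\ge c(n/2)^3\ln(n/2)=\Omega(n^3\ln n)$. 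For $n\equiv 10\pmod{12}$ the integer $m=(n+2)/3$ is even, so composing the injection of Proposition~\ref{progs5} with the bound of Theorem~\ref{thgs4} gives the lower bound immediately.

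The remaining class $n\equiv 2\pmod{12}$ is the interesting one; I would refine it modulo $48$. When $n\equiv 2\pmod{48}$ one has $n=8n'+2$ with $n'$ even and eventually exceeding $75$, so Theorem~\ref{thgs3}(1), applied with a fixed $S_{2n'+2}\in S(3,4,2n'+2)$ and a varying $3$-BBD$(n')$, injects $3$-BBD$(n')$ into $S(3,4,n)$, and Theorem~\ref{thgs2} again supplies $\Omega(n^3\ln n)$. For $n\equiv 14,26,38\pmod{48}$ I would use the injections of Proposition~\ref{progs6}: setting $m=(n+10)/6$ for part (1) (which requires $m\equiv 2,4\pmod 6$) or $m=(n+4)/3$ for part (2) (which requires $m\equiv 10\pmod{12}$), a residue-by-residue check shows that at least one of these $m$ is always SQS-admissible and lands in one of the already-handled classes, after which we invoke the previously proved bound at $m$.

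The main obstacle is precisely the modular bookkeeping in this last case: one must verify that the compositions of Proposition~\ref{progs6}(1)--(2) used for $n\equiv 14,26,38\pmod{48}$ terminate in a uniformly bounded number of steps in Case A, Case B, or the Theorem~\ref{thgs3}(1) subcase. A closer look at $n\pmod{144}$ shows that only the residue $n\equiv 74\pmod{144}$ requires two successive reductions rather than one, with every other residue resolving in a single application of Proposition~\ref{progs6}. Since the recursion depth is bounded independently of $n$ and each step inflates the reciprocal of the implicit constant by at most a bounded factor ($6^3$ or $3^3$), the estimate $\log|S(3,4,n)|=\Omega(n^3\ln n)$ propagates, with a worse but still positive constant, to every admissible residue class.
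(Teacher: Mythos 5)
Your proof is correct and follows essentially the same route as the paper: a trivial counting upper bound, then a residue-class analysis that reduces every admissible $n$ to Theorem~\ref{thgs2} via Proposition~\ref{progs3}(1), to Theorem~\ref{thgs4} via Proposition~\ref{progs5}, to Theorem~\ref{thgs3}(1) for a base subclass of $n\equiv 2\pmod{12}$, and to already-settled classes via the injections of Proposition~\ref{progs6}. The only difference is bookkeeping: the paper splits the class $n\equiv 2\pmod{12}$ modulo $36$ with base subsequence $n=6^4r+2$, while you split modulo $48$ and $144$; your explicit check that the recursion depth is uniformly bounded (so the constant only degrades by a bounded factor) is exactly the point the paper's choice of base subsequence is meant to secure.
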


\begin{proof} The upper bound is obvious (see \cite{Lenz}). To prove lower
bound we will consider apart some subsequences of integers.

(a) Consider a subsequence $n=4k$. For this subsequence the required
asymptotic estimate is a corollary of Theorem \ref{thgs2} and
Proposition \ref{progs3}(1).

(b) Consider the subsequence $n\equiv4(\mod{6})$. Then $n=3(2r+2)-2$
and the required asymptotic estimate is a corollary of Theorem
\ref{thgs4} and Proposition \ref{progs5}.

It retains to  consider three subsequences $n \mod{ 36} = 2, 14\
{\rm or}\ 26$.

(c) If $n=3(12r+10)-4$ then for establishing the required asymptotic
estimate we  use Proposition \ref{progs6}(1) and the proved case
(b).

(d) If $n=6(6r+4)-10$ then  we  use Proposition \ref{progs6}(2) and
the proved case (b).

(e) Consider the case $n\mod{36} = 2$. If $n=6^4r+2= 8(3^42r)+2$
then the required  asymptotic estimate is a corollary of Theorems
\ref{thgs2} and \ref{thgs3}(1). The other cases are reduced to the
subsequence $n=6^4r+2$ by applying Proposition \ref{progs6}(2).
\end{proof}

Notice that a trivial upper bound for the cardinality of
automorphism group of $SQS(n)$ is $n!$, and $\ln(n!)=o(n^3\ln(n))$
by Stirling formula. Therefore logarithm of the number of
nonisomorphic $SQS(n)$ constructed above is asymptotically equal to
$\Theta(n^3\ln n)$.

\section{Acknowledgments}
The author would like to thank  Denis Krotov and the anonymous
referees for their useful comments.

\end{document}

\bibitem{ColDin}
Colbourn, Charles J. (ed.); Dinitz, Jeffrey H. (ed.) The CRC
handbook of combinatorial designs. 2nd ed. Discrete Mathematics and
its Applications. Boca Raton, FL: Chapman $\&$ Hall/CRC. (2007).

\bibitem{Pot}
Potapov V.N. On the number of latin hypercubes, pairs of orthogonal
latin squares and  MDS codes // arXiv.org eprint math.,
math.CO/1510.06212